\newcommand{\bbn}{\mathbb{N}}
\newcommand{\bbr}{\mathbb{R}}
\newcommand{\abs}[1]{\left\lvert #1\right\rvert}
\newcommand{\brac}[1]{\left( #1\right)}
\newcommand*{\bbe}{
  \mathop{
    \mathchoice{\vcenter{\hbox{\larger[4]$\mathbb{E}$}}}
               {\kern0pt\mathbb{E}}
               {\kern0pt\mathbb{E}}
               {\kern0pt\mathbb{E}}
  }\displaylimits
}
\providecommand{\floor}[1]{\left\lfloor#1\right\rfloor}
\newtheorem{theorem}{Theorem}
\newtheorem{lemma}[theorem]{Lemma}
\newtheorem{corollary}[theorem]{Corollary}
\newtheorem{conjecture}[theorem]{Conjecture}
\theoremstyle{definition}
\newtheorem{remark}[theorem]{Remark}
\begin{document}
 \title{Additive structure in convex sets}
\author{Thomas F. Bloom}
\address{Department of Mathematics, University of Manchester, Manchester, M13 9PL}
\email{thomas.bloom@manchester.ac.uk}
\author{Jakob F\"{u}hrer}
\address{Institute for Algebra, Johannes Kepler Universit\"{a}t\\
Linz, Austria}
\email{jakob.fuehrer@jku.at}
\author{Oliver Roche-Newton}
\address{Institute for Algebra, Johannes Kepler Universit\"{a}t\\
Linz, Austria}
\email{o.rochenewton@gmail.com}

\begin{abstract}
    This paper considers some different measures for how additively structured a convex set can be. The main result gives a construction of a convex set $A$ containing $\Omega(|A|^{3/2})$ three-term arithmetic progressions.
\end{abstract}

\maketitle

\section{Introduction}


A convex set is a finite set $A\subset \bbr$ such that, if the elements are ordered $A=\{a_1<\cdots<a_n\}$, the consecutive differences
\[d_i=a_{i+1}-a_i\quad\textrm{ for }\quad 1 \leq i \leq n-1\]
form a strictly increasing sequence. This is equivalent to the statement that $A=\{ f(x) : 1\leq x\leq n\}$ for a strictly convex and increasing function $f:\bbr \to \bbr$. In a similar spirit to the sum-product problem, one expects that a strictly convex function disturbs (or even destroys) the additive structure of a set or, to put this another way, that a convex set cannot be additively structured. 

This vague principle can be quantified in various ways. It was conjectured by Erd\H{o}s  
that the bound\footnote{Here and throughout this paper, the notation  $X\gg Y$, $Y \ll X,$ $X=\Omega(Y)$, and $Y=O(X)$ are all equivalent and mean that $X\geq cY$ for some absolute constant $c>0$. The notation $X \asymp Y$ (or equivalently $X= \Theta(Y)$) indicates that both $X \ll Y$ and $X \gg Y$ hold.}
\[
|A+A| \gg |A|^{2-o(1)}
\]
holds for any convex set $A$, with a similar conjecture for the difference set $A-A$. This problem remains open, with the current best-known bounds
\[
|A+A| \gg |A|^{30/19-o(1)} ,\,\,\,\,\, |A-A| \gg |A|^{6681/4175-o(1)}
\]
due to Rudnev and Stevens \cite{RuSt22} and Bloom \cite{Bl25} respectively. Note that the latter bound gives a small improvement on the previous record with exponent $8/5-o(1)$, due to Schoen and Shkredov \cite{ScSh11}.

Another useful way to measure of the additive structure of a set is the additive energy, defined by
\[
E(A):= | \{ (a,b,c,d) \in A^4: a+b=c+d \}|.
\]
Since a convex set should be additively unstructured, one expects $E(A)$ to be small for any convex $A$. The trivial solutions $a+b=a+b$ already give $E(A) \geq |A|^2$, and the example of the first $n$ squares shows that the energy can be slightly larger than quadratic by a logarithmic factor. The best-known bound
\begin{equation} \label{energyupper}
E(A) \ll |A|^{\frac{123}{50}+o(1)}
\end{equation}
is due to Bloom \cite{Bl25}, slightly improving an earlier bound of Shkredov \cite{Sh13}. There are no known constructions of large convex sets $A$ such that $E(A) \geq |A|^{2+c}$ for a strictly positive $c$. With this in mind, the following rather strong conjecture is not entirely implausible, although we do approach it with some suspicion.
\begin{conjecture}\label{conj1}
If $A\subset \bbr$ is convex then
\[E(A) \leq \abs{A}^{2+o(1)}.\]
\end{conjecture}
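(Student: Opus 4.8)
Note at the outset that Conjecture~\ref{conj1} implies $\abs{A+A}\gg\abs{A}^{2-o(1)}$ through the Cauchy--Schwarz bound $\abs{A+A}\geq\abs{A}^{4}/E(A)$, so any proof must be at least as hard as the Erd\H{o}s sumset conjecture; accordingly the plan below is a framework in which the genuinely new input needed is isolated, rather than a complete argument. Writing $r(x)=r_{A-A}(x)=\abs{\{(a,a')\in A^2:a-a'=x\}}$, so that $E(A)=\sum_x r(x)^2$, a dyadic decomposition reduces everything to showing that a convex set has \emph{few popular differences}: if $D_t=\{x:r(x)\geq t\}$ satisfies $\abs{D_t}\ll\abs{A}^{2+o(1)}t^{-2}$ for every $t$, then summing over the $O(\log\abs{A})$ relevant dyadic thresholds yields $E(A)\ll\abs{A}^{2+o(1)}$. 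The role of convexity is to be fed in at this last step: after translating so that $A=\{s_0<\cdots<s_{n-1}\}$ with $s_i$ the partial sums of the strictly increasing sequence $(d_k)$ of consecutive differences, a difference $x$ with $r(x)\geq t$ corresponds to $t$ distinct intervals of positions over which the $d_k$ sum to $x$, so the problem becomes a statement about consecutive block sums of a strictly monotone sequence concentrating on a single value.

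First I would bound $\abs{D_t}$ by incidence geometry: the points $(a,f(a))$, $a\in A$, lie on the graph of a strictly convex function, and a difference with $\geq t$ representations forces an arrangement of these points that is constrained by the Szemer\'edi--Trotter theorem and by Rudnev's point--plane bound. This gives $\abs{D_t}\ll(\abs{A}/t)^2$ in a restricted range of $t$ but with polynomial losses outside it, and optimising what this yields already recovers bounds of the shape of \eqref{energyupper} as in \cite{Bl25}. Reaching the exponent $2$ requires removing these losses across the full range of $t$, which no presently available incidence estimate achieves.

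The remaining step --- the one demanding a new idea --- is to bootstrap using the structure $D_t$ inherits from $A$: a difference of two elements of $D_t$ is a difference of two block sums of $(d_k)$, hence itself a block sum of the second differences $d_{k+1}-d_k$, which are strictly positive and, after a further reduction, monotone on most scales, so $D_t$ should again behave like a convex-type set, now of size roughly $\abs{A}^{2}t^{-2}$. An energy-of-energy argument in the spirit of Shkredov's eigenvalue method \cite{Sh13} should then convert a hypothetical failure of the bound for $A$ into a strictly worse failure for $D_t$, and iterating would close the estimate by descent. The main obstacle is precisely that this loop does not close: every sum-set or incidence input used in the iteration loses a polynomial factor, so the descent stalls. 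Given the suspicion already voiced, it is moreover entirely conceivable that the conjecture is false, in which case the right move is instead to search --- in the spirit of the construction that is the main result of this paper --- for a convex set with $E(A)\geq\abs{A}^{2+c}$.
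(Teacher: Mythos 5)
This statement is an open conjecture, not a theorem: the paper offers no proof of Conjecture~\ref{conj1}, explicitly notes that the best known bound is only \eqref{energyupper}, and even voices suspicion that the conjecture might be false (indeed, it observes that any construction with $T_3(A)\gg\abs{A}^{3/2+c}$ would refute it via $T_3(A)\leq\abs{A}^{1/2}E(A)^{1/2}$). Your submission, as you yourself acknowledge, is likewise not a proof but a programme, and the two steps it rests on are exactly where the difficulty lives. First, the ``reduction'' to showing $\abs{D_t}\ll\abs{A}^{2+o(1)}t^{-2}$ for all $t$ is not a reduction at all: bounding $E(A)$ by dyadic level sets of $r_{A-A}$ is an identity-level reformulation of the conjecture, so no ground is gained. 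Second, the incidence input you invoke does not deliver what you claim even in a restricted range: for convex sets the Szemer\'edi--Trotter argument gives $\abs{D_t}\ll\abs{A}^{3}/t^{3}$, which after dyadic summation (combined with the trivial bound $\abs{D_t}\leq\abs{A}^2/t$) recovers only $E(A)\ll\abs{A}^{5/2}$, and the refinements of Shkredov \cite{Sh13} and Bloom \cite{Bl25} push this only to \eqref{energyupper}; nothing currently known approaches $\abs{D_t}\ll\abs{A}^{2+o(1)}t^{-2}$ uniformly in $t$, and your proposed bootstrap (treating $D_t$ as ``convex-like'' and running a descent) is precisely the step you concede does not close because every available sumset or incidence estimate loses a polynomial factor.

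So the concrete gap is that the two load-bearing claims --- the popular-difference bound across the full range of $t$, and the energy-of-energy descent that would remove the polynomial losses --- are asserted as desiderata rather than proved, and no argument in the proposal supplies them. Since the paper treats this statement purely as a conjecture (and its main construction is motivated in part by the possibility of refuting it), a correct response here is either to present it as open or to attempt a counterexample; what you have written should not be read as establishing the bound $E(A)\leq\abs{A}^{2+o(1)}$ for any convex set.
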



One can also consider solutions to other additive equations, and a close relative of the energy $E(A)$ is the quantity $T_3(A)$, which counts the number of three-term arithmetic progressions inside $A$. That is, define 
\[
T_3(A):= |\{ (a,b,c) \in A^3 : 2a=b+c \}|.
\]
Trivially we have
\[
\abs{A}\leq T_3(A)\leq \abs{A}^2.
\]
The lower bound comes from considering the trivial arithmetic progressions whereby $a=b=c$, while the upper bound follows from the observation that fixing $a$ and $b$ determines $c$. Since very structured sets (for example, arithmetic progressions) have $T_3(A)\gg \abs{A}^2$, the count $T_3(A)$ can be viewed as an alternative measure of additive structure. To this end, it is natural to ask how it behaves for convex sets. Of course, one can take convex sets which do not contain any non-trivial three-term arithmetic progressions (for example, $A=\{1,2,4,\ldots,2^n\}$), and hence $T_3(A)=\abs{A}$ is certainly possible. The interesting question is how large $T_3(A)$ can possibly be. In this paper, perhaps surprisingly considering the above heuristic that convex sets lack additive structure, we construct a convex set containing many $3$-term arithmetic progressions.

\begin{theorem} \label{thm:construction1}
There are arbitrarily large convex sets $A\subset \bbr$ which contain $\gg \abs{A}^{3/2}$ many non-trivial $3$-term arithmetic progressions.   
\end{theorem}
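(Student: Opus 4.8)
The plan is to produce the set explicitly rather than via a probabilistic or extremal argument, and then to count its progressions by hand. Two reformulations guide everything. First, convexity of $A=\{a_1<\cdots<a_n\}$ is just the requirement that the difference sequence $D_i=a_{i+1}-a_i$ be strictly increasing. Second, a non-trivial three-term progression $a_i,a_j,a_k$ with $i<j<k$ is precisely a pair of \emph{adjacent} blocks of the sequence $D$, namely $[i,j-1]$ and $[j,k-1]$, with equal sum. So the object to construct is a strictly increasing finite sequence $D$ of length $n-1$ in which $\Omega(n^{3/2})$ pairs of touching intervals have equal sum; equivalently $T_3(A)=n+2\#\{(i,j,k):i<j<k,\ \sum_{l=i}^{j-1}D_l=\sum_{l=j}^{k-1}D_l\}\gg n^{3/2}$.

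A first instinct is a block or recursive construction: write $A$ as a union of many coarsened copies of one short convex block, so that progressions come either from inside the copies or from across them. I expect this to be instructive but insufficient in its naive form, and understanding why is the key to finding the right construction. Convexity forces each later copy to be strictly \emph{coarser} than everything before it — a convex set can never follow a large gap by a smaller one, so the copies cannot be translates, only dilates (or similarly coarsened copies) — and this coarsening compounds multiplicatively from copy to copy; once the scales have separated, the midpoint of a point from one copy and a point from another lands in the empty region \emph{between} copies rather than on $A$, so there are essentially no cross-copy progressions and one only recovers $T_3(A)\asymp n$. On the other hand a block whose gaps are all nearly equal (so that it does not fan out) is essentially an arithmetic progression, and a genuine convex perturbation of an AP destroys all of its progressions. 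The construction that works must therefore be cleverer: an explicit set in which convexity holds for a structural reason while additive coincidences among the $a_i$ persist for an independent, arithmetic reason — for instance a set whose elements are the values of a quadratic (or bilinear) form on a carefully chosen two-parameter domain, so that $a_i+a_k=2a_j$ becomes essentially one arithmetic condition on the parameters with $\asymp n^{3/2}$ solutions, while convexity follows from monotonicity of the form along the ordering of the domain.

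Granting such a construction, the proof then runs as follows: (i) define the set $A$ and its parameter domain; (ii) prove $A$ is convex by showing consecutive differences strictly increase, which for a construction of this kind reduces to a short monotonicity computation on the form together with a handful of ``gap'' inequalities at the seams of the domain where the ordering switches from one parameter to the other; (iii) count $T_3(A)$ by splitting into the trivial progressions, the ``structured'' ones coming from the built-in arithmetic condition, and a remainder term that must be bounded away, and then checking the structured count is $\gg n^{3/2}$; (iv) deduce Theorem~\ref{thm:construction1} by letting the parameters grow. The main obstacle — where essentially all the work lives — is carrying out (ii) and (iii) \emph{simultaneously}: convexity is a rigidity condition that resists precisely the additive regularity needed to make $T_3$ large, so the construction has to be tuned so that the progression count is still $\gg n^{3/2}$ at the exact moment the difference sequence is forced to be strictly increasing, and the stray/remainder progressions (which come from the seams and from degenerate alignments of the parameters) must be controlled to be lower-order.

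As a consistency check on the target exponent, note $T_3(A)=\sum_{b\in A}\#\{(a,c)\in A^2:a+c=2b\}\le |A|^{1/2}E(A)^{1/2}$ by Cauchy--Schwarz, so \eqref{energyupper} gives $T_3(A)\ll|A|^{173/100+o(1)}$ for every convex $A$. The sought exponent $3/2$ sits comfortably below this ceiling, so there is a genuine window $[3/2,\,173/100]$ in which such a construction can exist, and nothing in the known upper bounds obstructs it.
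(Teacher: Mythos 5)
Your reformulation is correct (convexity as strict increase of the difference sequence, non-trivial progressions as adjacent blocks of equal sum, and the identity $T_3(A)=n+2\#\{\dots\}$), your diagnosis of why naive block constructions fail is sound, and your guess for the shape of the answer is essentially the one the paper uses: the construction there takes $A=\bigcup_{m\le \ell<2m}\{f_\ell(k): y_{\ell-1}\le k\le x_\ell\}$ with $f_\ell(x)=ax^2+\ell(x+n)$, which is exactly ``values of a quadratic/bilinear form on a two-parameter domain,'' with convexity coming from the $ax^2$ term and the progressions coming from linearity in $\ell$ (each $P_k=\{f_\ell(k):m\le\ell<2m\}$ is an $m$-term progression). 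So as a research plan this is well aimed.

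But as a proof it has a genuine gap, and it is the whole gap: the theorem is a pure existence statement, so its entire content is the construction, and you never exhibit one. The phrase ``Granting such a construction'' concedes precisely the point at issue. Steps (i)--(iii) of your outline are where all the mathematics lives, and none of it is carried out: you do not specify the domain, and in particular you do not address the one genuinely delicate issue, namely how to cut the parameter range for each $\ell$ so that the blocks at different scales concatenate into a single convex set. In the paper this is done by choosing $y_\ell$ with $n+y_\ell\equiv 1\pmod{\ell}$ and $x_\ell$ with $\ell x_\ell=n+(\ell+1)(y_\ell-1)$, which makes the seam condition $f_\ell(x_\ell)-f_\ell(x_\ell-1)<f_{\ell+1}(y_\ell)-f_\ell(x_\ell)<f_{\ell+1}(y_\ell+1)-f_{\ell+1}(y_\ell)$ collapse (after cancelling the linear parts) to inequalities that hold for any sufficiently small $a>0$; without some such arithmetic choice the seams simply do not close up, and this is not a routine verification one can defer. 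Likewise your step (iii) worries about a ``remainder term that must be bounded away,'' but in the actual construction no such term arises: one simply counts the $\ge n/(4m)$ disjoint progressions $P_k$ of length $m$ directly, giving $\gg (n/m)\cdot m^2=nm\asymp n^{3/2}$ when $m\asymp n^{1/2}$. Finally, a minor point: your Cauchy--Schwarz ceiling $|A|^{173/100+o(1)}$ is valid but weaker than the unconditional bound $T_3(A)\ll|A|^{5/3}$ that follows from Schoen's estimate $r_{A+A}(x)\ll|A|^{2/3}$; neither affects the construction.
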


Moreover, our construction generalises to give convex sets which have relatively many copies of various additive configurations, including longer progressions. See Theorem \ref{thm:constructiongen} for a more general form of the statement. The construction appeals to an idea from a paper of Ruzsa and Zhelezov \cite{RuZh19} in which they proved the existence of a set $A$ with the property that $A+A$ contains a large convex set (see also Bhowmick, Lund, and Roche-Newton \cite{BLR24}).

For upper bounds, we observe that Conjecture \ref{conj1} would imply a nearly matching upper bound $T_3(A) \ll |A|^{3/2+o(1)}$. Indeed, since $E(A)=\sum_x r_{A+A}(x)^2$, the Cauchy-Schwarz inequality yields
\[
T_3(A) =\sum_{a\in A}r_{A+A}(2a)\leq \abs{A}^{1/2}E(A)^{1/2}\leq \abs{A}^{3/2+o(1)}.
\]
To put this observation in its contrapositive form, any construction with $T_3(A) \geq |A|^{3/2}$ would also give a construction refuting Conjecture \ref{conj1}. As an unconditional upper bound, we have the following result.
\begin{theorem} \label{thm:3APupper}
  If $A \subset \mathbb R$ is convex then
  \[
  T_3(A) \ll |A|^{5/3}.
  \]
\end{theorem}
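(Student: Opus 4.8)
The plan is to bound $T_3(A)$ by exploiting the convexity of $A$ through a second-moment / Cauchy–Schwarz argument on the difference set, combined with a Szemerédi–Trotter type incidence bound. Write $T_3(A) = |\{(a,b,c)\in A^3 : a-b = b-c\}|$, so a non-trivial 3-AP corresponds to a pair of equal nonzero differences $a-b=b-c=d$ sharing the common point $b$. For each difference $d$, let $r_{A-A}(d) = |\{(x,y)\in A^2 : x-y=d\}|$; then the number of 3-APs with common difference $d$ is exactly the number of $b\in A$ with $b+d\in A$ and $b-d\in A$, which is at most $\min(r_{A-A}(d), r_{A-A}(2d))$ and in particular at most $r_{A-A}(d)$. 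Hence $T_3(A) \le \sum_{d} \#\{b : b-d, b, b+d \in A\}$, and the first step is to reorganise this sum so that the convexity hypothesis can be brought to bear, since a priori $\sum_d r_{A-A}(d) = |A|^2$ gives nothing.

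The key structural input is that convex sets have few solutions to $a-b=c-d$ lying in a prescribed "popular" range, or equivalently that the third-order energy is controlled; more usefully, I would use the fact that if $A=\{f(1),\dots,f(n)\}$ for strictly convex increasing $f$, then collinear triples among the points $(i,f(i))$ are rare. The second step is therefore to set up an incidence problem: a 3-AP $2b = a+c$ in $A$ with $a=f(i),b=f(j),c=f(k)$, $i<j<k$, forces $f(i)+f(k) = 2f(j)$. Counting such triples is counting incidences between the $n$ points $(j, f(j))$ and the family of lines (or curves) determined by chords; after a dyadic decomposition according to the gap $k-i$ or according to the multiplicity $r_{A-A}(d)$, one arrives at a bound of the shape: the number of 3-APs with $r_{A-A}(d)\asymp \Delta$ is $\ll (|A|/\Delta)\cdot(\text{incidences})$. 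Using the Szemerédi–Trotter bound $I(P,L)\ll (|P||L|)^{2/3} + |P| + |L|$ with $|P|, |L| \ll |A|$ should yield, after summing the dyadic pieces, the exponent $5/3$: roughly, the "rich" differences contribute $\ll |A|^{5/3}$ via the incidence term and the "poor" differences contribute $\ll |A|^{5/3}$ trivially, with the crossover at $\Delta \asymp |A|^{1/3}$.

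Concretely, I would split $T_3(A) = \sum_j N(j)$ where $N(j) = |\{(i,k): i<j<k,\ f(i)+f(k)=2f(j)\}|$ counts 3-APs with middle term $f(j)$. For fixed $j$, each such $(i,k)$ gives a representation of $2f(j)$ as a sum of two elements of $A$, one below and one above $f(j)$; summing over $j$, $T_3(A) \le \sum_j r_{A+A}(2f(j))$. Now partition the $j$'s according to $r_{A+A}(2f(j)) \asymp \Delta$. By a standard consequence of Szemerédi–Trotter for convex sets (the number of $x$ with $r_{A+A}(x)\geq \Delta$ is $\ll |A|^3/\Delta^3 + |A|/\Delta$, coming from the incidence bound between $A\times A$ as a point set and lines $x+y=c$, using that convexity limits how many points of $A\times A$ lie on a line of slope $-1$... here one actually needs the sharper count for convex $A$), the number of such $j$ is $\ll \min(|A|, |A|^3/\Delta^3)$. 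Thus $T_3(A) \ll \sum_{\Delta \text{ dyadic}} \Delta \cdot \min(|A|, |A|^3/\Delta^3)$, and optimising at $\Delta\asymp |A|^{2/3}$ gives $T_3(A)\ll |A|^{5/3}$, up to the dyadic logarithmic factor which is absorbed (or removed by a more careful argument).

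The main obstacle I anticipate is establishing the right input bound on $|\{x : r_{A+A}(x) \geq \Delta\}|$ for convex $A$ — specifically getting the clean $|A|^3/\Delta^3$ (rather than something weaker like $|A|^2/\Delta^2$, which would only give $T_3(A)\ll |A|^{2}$) — because this is exactly where convexity must be used and not merely the trivial counting. The point is that $r_{A+A}(x)$ counts points of $A\times A$ on the line $X+Y=x$, and while an arbitrary set could have all such points on one line, a convex set forces the points $(a_i, x-a_i)$ with $a_i, x-a_i\in A$ to have convexly-spaced coordinates, which via Szemerédi–Trotter (applied to the curve $y=f^{-1}(x - f(t))$ or by a direct differencing argument on consecutive gaps) limits the count. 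Verifying that this gives the cube rather than the square in the denominator, and handling the interaction with the constraint $i<j<k$ (so we really have $r$ over $A_{<f(j)}$ and $A_{>f(j)}$, not all of $A$), is the technical heart; everything else is dyadic pigeonholing.
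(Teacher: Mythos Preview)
Your strategy and the paper's agree on the first reduction: write
\[
T_3(A)=\sum_{a\in A} r_{A+A}(2a).
\]
From here the paper is a one-liner: it invokes Schoen's \emph{pointwise} bound $r_{A+A}(x)\ll\abs{A}^{2/3}$ for convex $A$ (for which the paper supplies a short elementary proof via the Erd\H{o}s--Szekeres monotone subsequence theorem), and then simply sums over the $\abs{A}$ choices of midpoint to get $T_3(A)\ll\abs{A}\cdot\abs{A}^{2/3}=\abs{A}^{5/3}$. No dyadic decomposition, no incidence geometry, no logarithmic loss to worry about.

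Your route via a level-set bound $\abs{\{x:r_{A+A}(x)\ge\Delta\}}\ll\abs{A}^3/\Delta^3$ and dyadic summation would also give the exponent $5/3$, but the key input is precisely the step you flag as the ``main obstacle'' and leave unproven. The Szemer\'edi--Trotter argument you sketch---points $A\times A$ against the lines $x+y=c$---only yields $\abs{A}^4/\Delta^3$, since $\abs{A\times A}=\abs{A}^2$; this gives back the trivial $T_3(A)\ll\abs{A}^2$. Getting the cube $\abs{A}^3/\Delta^3$ genuinely requires exploiting convexity, and doing so is at least as much work as proving Schoen's pointwise bound directly. In fact, once you have Schoen's bound the level sets are \emph{empty} for $\Delta\gg\abs{A}^{2/3}$, so your dyadic sum collapses to the range $\Delta\ll\abs{A}^{2/3}$ where you only use the trivial bound $\abs{A}$ on the number of midpoints anyway---making the whole decomposition superfluous. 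The missing idea, then, is not a clever reorganisation of the sum but a direct attack on $r_{A+A}(x)$ itself; the paper does this by observing that if $r_{A+A}(0)$ is large then $A\cap(-A)$ is large, and then uses Erd\H{o}s--Szekeres on its consecutive gaps together with a pigeonhole lemma about nested intervals in convex sets to force $\abs{A}\gg r_{A+A}(0)^{3/2}$.
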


Theorem \ref{thm:3APupper} is an immediate corollary of an upper bound for the number of representations of an element of $A+A$. Let $r_{A+A}(x)$ denote the number of such representations, that is,
\[
r_{A+A}(x)= | \{ (a,b) \in A \times A : a+b=x \}|.
\]
Schoen \cite{Sc14} proved that the bound $r_{A+A}(x) \ll |A|^{2/3}$ holds for any convex set $A$ and any $x \in \mathbb R$. Theorem~\ref{thm:3APupper} follows since
\[T_3(A)=\sum_{a\in A}r_{A+A}(2a).\]
In this paper we give an alternative proof of Schoen's upper bound, and also give a construction of a convex set such that
\[
r_{A+A}(x) \gg |A|^{2/3},
\]
thus showing that Schoen's upper bound for additive representations is optimal.


We also consider the problem of finding a large Sidon set in a convex set. A set is Sidon if  the only solutions to the energy equation $a+b=c+d$ are the trivial ones with $\{a,b\}= \{c,d \}$. Another way to measure the additive structure of the set is to determine the size of the largest Sidon set contained within it. Given an arbitrary finite set $A \subset \mathbb R$, let $S(A)$ denote this quantity, that is,
\[
S(A)= \max \{ |B| : B \text { is Sidon and }B \subset A \}.
\]
It follows from a more general result of Koml\'{o}s, Sulyok, and Szemer\'{e}di \cite{KSS75} that $S(A) \gg |A|^{1/2}$ for any finite $A \subset \mathbb R$, and one can see that this is optimal by considering the very additively structured set $A= \{1,\dots, n \}$. (There are several ways to construct Sidon sets of size $\gg n^{1/2}$ in $\{1,\ldots,n\}$. The first such construction is due to Singer \cite{Si38}.) Intuitively, we expect that less additively structured sets should always contain larger Sidon sets. With this in mind, we consider the following problem; given a convex set $A \subset \mathbb R$, how small can $S(A)$ be? The next two statements give some partial answers to this question.

\begin{theorem} \label{thm:Sidonlower}
If $A\subset \bbr$ is convex then
    \[
    S(A) \gg |A|^{\frac{1}{2}+\frac{1}{75}-o(1)}.
    \]
\end{theorem}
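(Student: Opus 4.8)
The plan is to run a standard probabilistic deletion argument, feeding in the energy bound \eqref{energyupper} together with the bound on three-term progressions from Theorem~\ref{thm:3APupper} (equivalently Schoen's representation bound). Write $n=\abs{A}$ and let $p\in(0,1]$ be a parameter to be optimised at the end. First I would form a random set $B\subseteq A$ by retaining each element of $A$ independently with probability $p$, so that $\E\abs{B}=pn$.

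The key step is to bound $\E[Q]$, where $Q$ denotes the number of \emph{nontrivial} additive quadruples $(a,b,c,d)\in B^{4}$, meaning $a+b=c+d$ but $\{a,b\}\neq\{c,d\}$. I would split these according to the number of distinct coordinates. If all four coordinates are distinct, the quadruple survives in $B$ with probability $p^{4}$, and the number of such quadruples in $A$ is at most $E(A)\ll n^{123/50+o(1)}$. If exactly three coordinates are distinct, then $a=c$ or $a=d$ would force $\{a,b\}=\{c,d\}$, so we must have $a=b$ or $c=d$; after relabelling this is a nontrivial solution of $2a=b+c$, i.e.\ a genuine three-term progression in $A$, so the number of such quadruples is $O(T_{3}(A))\ll n^{5/3}$ by Theorem~\ref{thm:3APupper}, and each survives in $B$ with probability $p^{3}$. (Two coinciding pairs again force $\{a,b\}=\{c,d\}$, so there is nothing else to consider.) Hence
\[
\E[Q]\ll p^{3}n^{5/3}+p^{4}n^{123/50+o(1)}.
\]

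Next I would run the deletion step: there is a realisation of $B$ with $\abs{B}-Q\geq \E\abs{B}-\E[Q]$, and deleting one element from each surviving nontrivial quadruple in it leaves a Sidon subset of $A$ of size at least
\[
pn-O\!\left(p^{3}n^{5/3}+p^{4}n^{123/50+o(1)}\right).
\]
Writing $p=n^{-t}$, the three exponents in play are $1-t$, $\tfrac{5}{3}-3t$ and $\tfrac{123}{50}-4t+o(1)$; requiring the two error exponents to lie below the main one gives $t\geq\tfrac13$ from the progression term and $t\geq\tfrac{73}{150}$ from the energy term. Taking $t$ equal to $\tfrac{73}{150}$ plus an arbitrarily small positive constant makes $pn$ dominate, and we obtain
\[
S(A)\gg n^{\,1-73/150-o(1)}=n^{\,77/150-o(1)}=\abs{A}^{\frac12+\frac{1}{75}-o(1)}.
\]

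The argument is essentially forced once the two input bounds are in hand, so there is no serious obstacle; the one genuine subtlety is the need to separate the three-coordinate quadruples (the three-term progressions) from the full energy. If one crudely bounded all nontrivial quadruples by $p^{3}E(A)$, or bounded the three-coordinate ones only by the trivial $p^{3}n^{2}$, the exponent would fall to $\tfrac12$ or below and the theorem would be lost; it is precisely because $T_{3}(A)\ll n^{5/3}$ is comfortably smaller than $n^{2-2/75}$ that the progressions are not the bottleneck. Thus the exponent $\tfrac12+\tfrac1{75}$ is exactly what the current energy bound \eqref{energyupper} delivers, and any improvement there would immediately improve Theorem~\ref{thm:Sidonlower}.
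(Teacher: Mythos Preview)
Your proposal is correct and follows essentially the same argument as the paper: a $p$-random subset, the split of nontrivial quadruples into the four-distinct case (controlled by $E(A)\ll\abs{A}^{123/50+o(1)}$) and the three-distinct case (controlled by $T_3(A)\ll\abs{A}^{5/3}$), the choice $p\asymp\abs{A}^{-73/150-\epsilon}$, and the deletion step. Your case analysis is in fact slightly more explicit than the paper's, but the proof is the same.
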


From the other side, we obtain the following result.

\begin{theorem} \label{thm:Sidonupper}
    There exist arbitrarily large convex sets $A\subset \bbr$ such that 
    \[
    S(A) \ll |A|^{3/4}.
    \]
\end{theorem}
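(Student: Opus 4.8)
The goal is to produce arbitrarily large convex sets $A$ whose largest Sidon subset has size $O(|A|^{3/4})$. The natural strategy is to build a convex set out of many short "blocks" such that within each block there is a great deal of additive structure—enough to force any Sidon subset to take few elements per block—while the blocks themselves are spread out convexly so that the global convexity is maintained. Concretely, I would look for a convex set that is a union of $k$ translates $A = \bigcup_{j=1}^k (t_j + P)$ of a single short progression-like pattern $P$ of length $m$, where the translates $t_j$ grow fast enough (e.g. super-exponentially in $j$) that the consecutive-difference condition holds across the whole union. Then $|A| \asymp km$. If inside each block a Sidon set can have at most $O(\sqrt m)$ elements (which is the truth when $P = \{1,\dots,m\}$), and if additionally elements from different blocks cannot help too much, a Sidon subset of $A$ has size $O(k\sqrt m)$; optimizing $km = n$ against $k\sqrt m$ would give $k\sqrt m = n/\sqrt m$, which is minimized by taking $m$ large, so this simple-minded union does not immediately give $n^{3/4}$—one needs the blocks themselves to carry Sidon-destroying structure at a second scale.

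**Refined plan.** This suggests a two-scale (or iterated) construction. Take the block pattern $P$ itself to be not an interval but something like a Sidon set of size $\asymp \sqrt m$ inside $\{1,\dots,m\}$ rescaled, no—rather the opposite: we want $P$ to be a set in which Sidon subsets are small, so $P$ should be highly structured, e.g. a generalized arithmetic progression $P = \{a_1 x_1 + a_2 x_2 : 0 \le x_i < \ell\}$ of dimension $2$ and size $m = \ell^2$, in which the largest Sidon set has size $\asymp \ell = m^{1/2}$. Then arrange $k = \ell^2$ such blocks, well-separated, to form a convex $A$ of size $n = km = \ell^4$; a Sidon subset uses $O(\ell)$ points per block over $k = \ell^2$ blocks, giving $O(\ell^3) = O(n^{3/4})$. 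The two things to check are: (i) the union is genuinely convex, which is a matter of choosing the block translates $t_j$ to grow quickly (each $t_{j+1} - t_{j} - \operatorname{diam}(P)$ must exceed the largest internal difference, and the sequence of "gap increments" must itself increase—this forces the $t_j$ to grow roughly like a convex/geometric sequence, and one must verify that the internal differences of $P$, suitably scaled and placed, interleave correctly with the between-block differences to keep the full difference sequence strictly increasing); (ii) that a Sidon subset $B \subseteq A$ really is forced to have $|B \cap (t_j + P)| \ll \ell$ for each $j$. For (ii), the point is that $B \cap (t_j + P)$ is itself a Sidon set inside a translate of $P$, and a translate of a 2-dimensional GAP of side $\ell$ contains no Sidon set larger than $O(\ell)$ (since a Sidon set in a set of size $m$ that has additive energy $\gg m^3$—as a 2D GAP does—must be small; alternatively use that a Sidon set in $\mathbb Z^2$-like structure is bounded by the projection argument).

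**Main obstacle.** The principal difficulty is maintaining strict convexity of the whole union while keeping each block structured. Convexity is a rigid global constraint on the sorted sequence of all $n$ differences, and a 2-dimensional GAP block has many repeated differences internally—precisely the feature we exploit to kill Sidon subsets—so naively it is not convex on its own. The resolution has to be that we do not literally take $P$ to be a GAP, but rather take a convex set $P'$ that is "close to" a GAP in the sense that it still has small Sidon number: e.g. take $P$ to be an image $g(Q)$ of a 2D GAP $Q$ under a smooth strictly convex map $g$, chosen so that $g$ is close enough to affine on the scale of $Q$ that the additive structure (and hence the bound on Sidon subsets via an energy or covering argument) is approximately preserved, yet $g$ is curved enough to make $P$ and the whole union convex. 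Making this "approximately preserved" quantitative—showing that a Sidon subset of $g(Q)$ still has size $O(\sqrt{|Q|})$—is the crux, and I expect it to go via showing $g(Q)$ has a small number of "approximate" additive quadruples that nonetheless obstruct Sidon sets, or more cleanly by pigeonholing $g(Q)$ into few genuine translates of a structured set. The convexity bookkeeping for the between-block gaps is routine once the block is fixed: one sets $t_{j+1} = t_j + (\text{something growing geometrically})$ and checks the sorted difference list by hand. So the plan is: (1) fix a two-dimensional GAP $Q$ of size $m$; (2) apply a suitable strictly convex map to get a convex block $P$ with $S(P) \ll m^{1/2}$; (3) take $\asymp m$ rapidly-separated convex translates of $P$ to form $A$ with $|A| \asymp m^2$; (4) verify global convexity; (5) conclude $S(A) \ll m \cdot m^{1/2} = m^{3/2} \asymp |A|^{3/4}$.
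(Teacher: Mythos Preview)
Your block strategy has a genuine obstruction at step~(2), and it is not a technicality but a contradiction with the companion lower bound. In your final plan you take $k\asymp m$ well-separated blocks, each an affine image of a single set $P$ of size $m$, and bound $S(A)\le k\cdot S(P)$, aiming for $S(P)\ll m^{1/2}$. But once the blocks are well separated, each block consists of \emph{consecutive} elements of the convex set $A$, and a set of consecutive elements of a convex set is itself convex. Hence $P$ (or rather each block, which is an affine image of $P$) is a convex set of size $m$, and Theorem~\ref{thm:Sidonlower} forces $S(P)\gg m^{1/2+1/75-o(1)}$. So step~(2) is impossible as stated. More generally, the inequality $S(A)\le k\cdot S(P)$ with $|A|=km$ and $P$ convex can never beat the trivial target: to get $k\,S(P)\ll (km)^{3/4}$ you would need $S(P)\ll k^{-1/4}m^{3/4}$, and since $S(P)\ge m^{1/2}$ always (Koml\'os--Sulyok--Szemer\'edi), this forces $k\ll m$, giving at best $k\,S(P)\gg k\,m^{1/2}\ge (km)^{3/4}$ with equality only when the block already witnesses the theorem. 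The approach is circular.

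Your proposed fix---perturb a $2$-dimensional GAP $Q$ by a strictly convex $g$---runs into the same wall from the other side. For $g(Q)$ to sit as a block of consecutive elements inside a convex set, the consecutive differences of $g(Q)$ must be strictly increasing; but a genuine $2$-dimensional GAP has many equal consecutive gaps (short gaps inside each coset, long gaps between cosets), and forcing $g$ to be convex enough to monotonise these destroys exactly the additive coincidences you need to bound $S(g(Q))$. The ``approximate quadruples'' or ``pigeonhole into translates'' ideas do not rescue this: any convex block of size $m$ has $E(P)\ll m^{123/50+o(1)}$ by \eqref{energyupper}, which is far too little energy to push $S(P)$ down to $m^{1/2}$.

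The paper's proof avoids blocks entirely. It invokes the Ruzsa--Zhelezov construction (Theorem~\ref{thm:RZ}): there are sets $B,C$ of size $\asymp n$ such that $B+C$ contains a convex set $A$ of size $\asymp n^2$. One then views $A$ as the edge set of a bipartite graph on $B\times C$ (fixing one representation $a=b+c$ for each $a\in A$). A Sidon subset $A'\subseteq A$ of size $\gg n^{3/2}$ would give $\gg n^{3/2}$ edges on $n+n$ vertices, so K{\H o}v\'ari--S\'os--Tur\'an produces a $C_4$, i.e.\ $b_1+c_1,\,b_1+c_2,\,b_2+c_1,\,b_2+c_2\in A'$, which is a nontrivial additive quadruple. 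Hence $S(A)\ll n^{3/2}\asymp |A|^{3/4}$. The additive structure here is global (coming from the sumset representation of $A$), not localised in blocks, which is why the lower bound of Theorem~\ref{thm:Sidonlower} does not bite.
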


This question has a similar flavour to a Sidon set analogue of the sum-product problem posed by Klurman and Pohoata \cite{Po21}. They conjectured that any set $A \subset \mathbb R$ must contain either a large additive Sidon set or a large multiplicative Sidon set (i.e. a large set containing only trivial solutions to the equation $ab=cd$). The proof of Theorem \ref{thm:Sidonupper} is a fairly straightforward consequence of the result of Ruzsa and Zhelezov \cite{RuZh19}, and uses similar ideas Roche-Newton and Warren \cite{RoWa21} used to refute a strong form of the Klurman-Pohoata Conjecture. We refer to Shkredov \cite{Sh23} for more details on this Sidon sum-product problem.



We conclude the introduction by summarising what we know and believe about various structures within convex sets.
\begin{itemize}
\item \emph{Three-term arithmetic progressions}: 
We know that, if $A$ is a convex set, then
\[\abs{A}\leq T_3(A)\ll \abs{A}^{5/3}.\]
The existence of convex sets with $T_3(A)=\abs{A}$ is trivial (for example, take powers of $2$). In this paper we construct a convex set $A$ such that $T(A)\gg \abs{A}^{3/2}$ and we believe that
\[T(A) \ll\abs{A}^{3/2+o(1)}\]
for all convex $A$. 
\item \emph{Sidon sets}: We know that, if $A$ is a convex set, then 
\[\abs{A}^{\frac{1}{2}+c}\ll S(A) \leq \abs{A}\]
for any constant $0<c<1/75$. 
The existence of convex sets with $S(A)=\abs{A}$ is trivial (for example, take any geometric progression). In this paper we construct a convex set $A$ such that $S(A) \ll \abs{A}^{3/4}$. We think that it could be possible to improve this upper bound to $S(A) \ll \abs{A}^{2/3}$, although we were unable to make this construction work (see the discussion after the proof of Theorem \ref{thm:Sidonupper} for more details). We believe that
\[ S(A) \gg \abs{A}^{2/3-o(1)}\]
for all convex $A$. 
\item \emph{Longest progression}: Let $P(A)$ be the length of the longest arithmetic progression in $A$. We know that, if $A$ is a convex set, then
\[2 \leq P(A) \ll \abs{A}^{1/2}.\]
The existence of convex sets with $P(A)=2$ is trivial (for example, take powers of $2$). In this paper we construct a convex set $A$ such that $P(A)\gg \abs{A}^{1/2}$ (see Theorem~\ref{thm:constructiongen}).
\item \emph{Largest structured subset}: Let $Q(A)$ be the maximal size of $B\subseteq A$ such that $\abs{B+B}\leq 100\abs{B}$ (of course there is nothing special about the constant $100$ here). We know that, if $A$ is a convex set, then
\[Q(A) \ll\abs{A}^{2/3}.\]
Indeed, if $B$ has such small doubling then the pigeonhole principle implies that there is some $x \in B+B$ such that $r_{B+B}(x) \gg |B|$. However,
\[
|B| \ll r_{B+B}(x) \leq r_{A+A}(x) \ll |A|^{2/3},
\]
where the last inequality uses the fact that $A$ is convex and the aforementioned result of Schoen. In this paper we construct a convex set $A$ such that $Q(A)\gg \abs{A}^{1/2}$. We do not know where the truth lies.

It may be true that $Q(A)\gg \abs{A}^{2/3}$ infinitely often. Indeed, as far as we know, it is even possible that there exist arbitrarily large convex sets $A$ which contain sets $B$ of size $\asymp \abs{A}^{2/3}$ such that $B=P_1+P_2$ for some arithmetic progressions $P_1$ and $P_2$ (whence $\abs{B+B}\leq 4\abs{B}$).
\end{itemize}

In Section~\ref{sec-construction} we construct a convex set with many three-term arithmetic progressions, proving Theorem~\ref{thm:construction1}. In Section~\ref{sec-upper} we give a new proof of Schoen's upper bound $r_{A+A}$ (establishing Theorem~\ref{thm:3APupper}) and a construction showing it is best possible. In Section~\ref{sec-jarnik} we explain the connection to a well-known construction of Jarn\'ik. Finally, in Section~\ref{sec-sidon} we discuss Sidon sets in convex sets, proving Theorems~\ref{thm:Sidonlower} and \ref{thm:Sidonupper}.

\section{Constructing a convex set with additive structure}\label{sec-construction}

The main goal of this section is to prove Theorem \ref{thm:construction1}, constructing large convex sets which contain many three-term arithmetic progressions. We derive this from the following more general result.

\begin{theorem}\label{thm:constructiongen}
For any $n, m\in \bbn$ with $n\geq 8m^2$ there is a convex set $A$ of size $\abs{A}=\Theta( n)$ which contains at least $ \frac{n}{4m}$ disjoint arithmetic progressions, each of length $m$.
\end{theorem}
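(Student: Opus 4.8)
The plan is to build $A$ as a union of images of several arithmetic progressions under a single strictly convex function, arranged so that the whole union remains convex while each progression stays an arithmetic progression inside $A$. Concretely, I would look for a strictly increasing convex function $f$ (the natural candidate being $f(x) = \binom{x}{2}$ or a similar quadratic, which has the property that second differences are constant) and a family of long arithmetic progressions $P_1,\dots,P_k \subset \bbn$ such that $f$ restricted to each $P_j$ is \emph{affine} — so $f(P_j)$ is itself an arithmetic progression — while the global set $\bigcup_j f(P_j)$ is convex. This is exactly the circle of ideas from Ruzsa and Zhelezov \cite{RuZh19} alluded to in the introduction: one wants $f(P_j)$ to look linear on the scale of $P_j$ but the ambient behaviour of $f$ to still force strictly increasing consecutive differences on the union.

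The key steps, in order, would be: (i) fix the target — we want $\Theta(n)$ points total, split into roughly $n/(4m)$ blocks of $m$ points each, so each block is a progression of length $m$; (ii) choose the convex function and the scales: take $f$ quadratic, say $f(x) = x^2$ (or $f(x)=\binom{x}{2}$), and choose the $j$-th progression $P_j$ to have common difference $d_j$ small relative to its location $x_j$, so that on $P_j$ the function $f(x_j + t d_j) = x_j^2 + 2 t x_j d_j + t^2 d_j^2$ is dominated by its linear term — but since $t^2 d_j^2$ is not literally zero, one needs to be cleverer: instead take $P_j$ to be an arithmetic progression whose image under $f$ is \emph{exactly} an arithmetic progression. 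For $f(x)=x^2$ this fails, so the right move is to let $A$ be a set whose consecutive differences are \emph{almost} a fixed arithmetic progression with long constant runs. That is: prescribe the difference sequence $d_1 \le d_2 \le \cdots$ directly — take it to be weakly increasing, with $k \asymp n/m$ \emph{plateaus} each of length $m$ on which $d_i$ is constant, and strictly increasing between plateaus. Then $A$, defined by partial sums of this sequence, is convex (differences weakly increasing — we can perturb to make them strictly increasing while keeping plateaus "almost flat"), and each plateau of length $m$ gives an arithmetic progression of length $m+1$ in $A$. The condition $n \ge 8m^2$ is what guarantees there is enough "room": the differences must increase overall, and to fit $\asymp n/m$ plateaus each of constant value while keeping all differences in a bounded range (so $|A| = \Theta(n)$ rather than exploding) one needs the number of distinct difference-values, namely $\asymp n/m$, to be controllable — a counting/sizing constraint that forces something like $n \gtrsim m^2$.

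The main obstacle I anticipate is reconciling two competing demands: \emph{strict} convexity requires the consecutive differences to be strictly increasing, which naively forbids the length-$m$ constant runs we need for arithmetic progressions; but true arithmetic progressions in $A$ require genuinely constant runs of differences. The resolution — and the delicate part of the argument — is to make the differences strictly increasing \emph{globally} while being constant on each of the $n/(4m)$ designated blocks, which means the jumps between blocks must absorb all the required increase, and one must check that the resulting $A$ still has size $\Theta(n)$ (i.e. the differences don't grow too fast), that the blocks really are genuine $3$-term-AP-rich (for $m=3$, recovering Theorem \ref{thm:construction1} with $\gg n \asymp |A|$ progressions of length $3$... but we want $\gg |A|^{3/2}$, so in fact one takes $m \asymp |A|^{1/2}$, giving $\asymp |A|^{1/2}$ progressions of length $\asymp |A|^{1/2}$, hence $\gg |A|^{3/2}$ three-term progressions inside them), and that the disjointness of the blocks is genuine. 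Verifying the convexity of the union after all these perturbations — i.e. that no two difference values accidentally coincide or decrease across block boundaries — is the routine-but-fiddly computation I would set up carefully, using the hypothesis $n \ge 8 m^2$ precisely to guarantee enough slack.
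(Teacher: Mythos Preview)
Your proposal contains a genuine gap that you yourself flag but do not actually resolve. You write that the resolution is ``to make the differences strictly increasing globally while being constant on each of the $n/(4m)$ designated blocks'' --- but these two requirements are flatly contradictory. If consecutive differences $d_i,d_{i+1},\ldots,d_{i+m-1}$ are all equal (a plateau), then they are not strictly increasing, so $A$ is not convex in the paper's sense. Conversely, if you perturb the plateau to make the differences strictly increasing, then those $m+1$ consecutive elements of $A$ no longer form an arithmetic progression. There is no way to have both at once. More generally, \emph{no} three consecutive elements $a_i<a_{i+1}<a_{i+2}$ of a convex set can be in arithmetic progression, since $a_{i+1}-a_i<a_{i+2}-a_{i+1}$ by definition. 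So any arithmetic progression of length $\geq 3$ inside a convex set must consist of non-consecutive elements, and your plateau construction cannot produce any.

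The paper's construction sidesteps this by making the progressions \emph{transverse} to the convex structure rather than tangential to it. Concretely, it takes a one-parameter family of functions $f_\ell(x)=ax^2+\ell(x+n)$ for $m\leq \ell<2m$, applies each to an interval of integers to obtain convex pieces $B_\ell$, and glues these end-to-end so that $A=\bigcup_\ell B_\ell$ is itself convex. The arithmetic progressions are then the sets $P_k=\{f_\ell(k):m\leq \ell<2m\}$ for each fixed $k$: since $f_\ell(k)=ak^2+\ell(k+n)$ is \emph{linear in $\ell$}, each $P_k$ is a genuine arithmetic progression of length $m$, and it selects exactly one element from each $B_\ell$ --- so its elements are far from consecutive in $A$. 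Your instinct to look to the Ruzsa--Zhelezov circle of ideas was right, but the progressions have to run across the convex layers, not along them.
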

\begin{proof}
For $m\leq \ell <2m$ let
\[f_\ell(x) = a x^2+\ell(x+n),\]
where $a=a_{m,n}>0$ is some parameter to be chosen later (all that matters is that it is strictly positive and sufficiently small depending only on $n$ and $m$). Note that, for each fixed $\ell$, $f_\ell(x)$ is a strictly convex function of $x$, and for each fixed $x$, $f_\ell(x)$ is a linear function of $\ell$. 

For each $m\leq \ell <2m$ let $1\leq y_\ell \leq \ell$ be an integer such that 
\[n+y_\ell\equiv 1\pmod{\ell},\]
so that there exists some integer $x_\ell\geq 1$ with
\[\ell x_\ell=n+(\ell+1)(y_\ell-1).\]
Note that
\[x_\ell-y_\ell =\frac{n+y_\ell-1}{\ell}-1,\]
and hence
\[
\frac{n}{\ell} -1 \leq x_\ell-y_\ell < \frac{n}{\ell}.
\]
Similarly, since $x_\ell-y_{\ell-1}=x_\ell-y_\ell + (y_{\ell}-y_{\ell-1})$ and the $y_{i}$ are not too far apart, we have
\[
\frac{n}{\ell} - \ell< x_\ell-y_{\ell-1}<\frac{n}{\ell} + \ell.
\]
We now let, again for $m\leq \ell < 2m$, 
\[B_\ell = \{ f_\ell(k) : y_{\ell-1} \leq k\leq x_\ell\}.\]
Note that, for this range of $\ell$, we have
\[
\frac{n}{4m} \leq |B_{\ell}| \leq \frac{2n}{m}.
\]
As the image of an interval of consecutive integers under a strictly convex function, each $B_\ell$ is a convex set. Moreover, we claim that the $B_\ell$ are pairwise disjoint, and furthermore that $A=\cup_{m\leq \ell <2m}B_\ell$ is also a convex set. Both of these facts follow from verifying the inequalities, for all $m\leq \ell<2m$,
\[f_\ell(x_\ell)-f_\ell(x_\ell-1)< f_{\ell+1}(y_\ell)-f_\ell(x_\ell)<f_{\ell+1}(y_{\ell}+1)-f_{\ell+1}(y_\ell).\]
Inserting the definition of $f$ these inequalities are equivalent to
\[a(2x_\ell-1)+\ell< a(y_\ell^2-x_\ell^2)+(\ell+1)y_\ell-\ell x_\ell+n< a(2y_\ell+1)+\ell+1.\]
By our choice of $x_\ell$ this simplifies to 
\[a(2x_\ell-1)-1< a(y_\ell^2-x_\ell^2)< a(2y_\ell+1).\]
The second inequality is true simply because $x_\ell >y_\ell>0$ and $a>0$. The former is true for sufficiently small $a>0$, as taking $a$ small enough ensures $a(2x_\ell-1)<1/2$ and $a(x_\ell^2-y_\ell^2)<1/2$, say. (In particular $a$ can be taken to be of the order $ m^2/n^2$.)

We have therefore produced a convex set $A$ of size $|A|=\Theta(n)$. Moreover, since $y_\ell \leq \ell\leq 2m$, each $B_\ell$ contains
\[\{ f_\ell (k) : 2m\leq k\leq \min_\ell x_\ell\},\]
and hence $A=\cup B_\ell$ contains, for each $2m\leq k\leq \min_\ell x_\ell$,
\[P_k = \{ f_\ell(k) : m\leq \ell <2m\},\]
which is an arithmetic progression of length $m$. Finally we note that the disjointness of the $B_\ell$ implies the disjointness of the $P_k$, and we are done (after noting that $\min_\ell x_\ell \geq n/(2m)$, and hence there are $\geq n/4m$ many such progressions in $A$).
\end{proof}


It is worth noting that the condition $n \gg m^2$ in the statement of Theorem \ref{thm:constructiongen} is crucial, and it is not possible for such a construction to exist with $n=o(m^2)$. This is because an arithmetic progression in a convex set $A$ can have length at most $O(|A|^{1/2})$ (as we prove below in Corollary~\ref{cor-prog}). 

Applying Theorem~\ref{thm:constructiongen} with $n=\Theta(m^2)$ gives the following corollary, which encompasses Theorem~\ref{thm:construction1} from the introduction.

\begin{corollary}
There are arbitrarily large convex sets $A\subset \bbr$ which contain $\gg \abs{A}^{1/2}$ many disjoint arithmetic progressions of length $\gg \abs{A}^{1/2}$.

In particular, for any fixed $k\geq 3$ there are arbitrarily large convex sets $A\subset \bbr$ which contain $\gg_k \abs{A}^{3/2}$ many non-trivial $k$-term arithmetic progressions.
\end{corollary}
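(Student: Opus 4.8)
The plan is to apply Theorem~\ref{thm:constructiongen} at the boundary of its range of validity, taking $n$ essentially as small as is permitted relative to $m$, and then to harvest many short progressions out of each long one.

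Given a parameter $m\in\bbn$, set $n=8m^2$, which satisfies the hypothesis $n\geq 8m^2$. Theorem~\ref{thm:constructiongen} then yields a convex set $A$ with $\abs{A}=\Theta(n)=\Theta(m^2)$, hence $m\asymp\abs{A}^{1/2}$, containing at least $n/(4m)=2m$ pairwise disjoint arithmetic progressions $Q_1,\dots,Q_r$ with $r\geq 2m$, each of length exactly $m$. Thus $r\gg\abs{A}^{1/2}$ and each $Q_i$ has length $\gg\abs{A}^{1/2}$; letting $m\to\infty$ produces arbitrarily large such $A$, which gives the first assertion.

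For the ``in particular'' statement, fix $k\geq 3$ and count non-trivial $k$-term progressions inside one fixed $Q_i$. Since $Q_i$ is an arithmetic progression with $m$ terms, after an affine (hence progression-preserving, and non-triviality-preserving) change of variable we may identify it with $\{0,1,\dots,m-1\}$; for each common difference $j$ with $1\leq j\leq\floor{(m-1)/(k-1)}$ and each starting point $t$ with $0\leq t\leq m-1-(k-1)j$, the set $\{t,t+j,\dots,t+(k-1)j\}$ is a non-trivial $k$-term progression contained in $Q_i$, and distinct pairs $(j,t)$ give distinct progressions (the set determines its minimum $t$ and its common difference $j$). Summing the number of admissible $t$ over $j$ gives $\sum_{j}\bigl(m-(k-1)j\bigr)\gg_k m^2$ such progressions in $Q_i$. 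Because the $Q_i$ are pairwise disjoint, a non-empty progression lies inside at most one $Q_i$, so no progression is counted for two different indices $i$. Therefore $A$ contains at least a constant (depending on $k$) times $r\cdot m^2\gg_k\abs{A}^{1/2}\cdot\abs{A}=\abs{A}^{3/2}$ non-trivial $k$-term progressions.

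This deduction is essentially immediate from Theorem~\ref{thm:constructiongen}; the closest thing to a subtlety is choosing $n\asymp m^2$ so that the number of progressions and their common length both come out $\asymp\abs{A}^{1/2}$, together with the observation that $k$-term progressions extracted from distinct (hence disjoint) $Q_i$ cannot coincide. I do not expect any genuine obstacle here.
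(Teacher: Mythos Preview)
Your proof is correct and follows exactly the approach the paper indicates: apply Theorem~\ref{thm:constructiongen} with $n\asymp m^2$, so that both the number of progressions and their length are $\asymp\abs{A}^{1/2}$, and then count $k$-term subprogressions inside each $Q_i$. The paper states the corollary without writing out the counting; your added detail (the $\gg_k m^2$ count inside a length-$m$ progression and the disjointness observation) is the natural way to fill it in.
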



\begin{remark}
    The same bound also holds for other translation invariant configurations of fixed size. For example, given any homogeneous linear equation $\sum_{i=1}^k a_ix_i=0$ with $\sum_{i=1}^k a_i=0$, the convex set $A$ constructed above yields $\gg \abs{A}^{3/2}$ many non-trivial solutions.
\end{remark}

\section{Additive representations for a convex set and upper bounding $T_3(A)$}\label{sec-upper}

The first goal of this section is to prove Theorem \ref{thm:3APupper}. The result is implicit in the work of Garaev \cite{Ga00}, and we have found a few slightly different proofs while working on this paper. Perhaps the easiest method is to use an upper bound for the number of representations of $A+A$. The following result can be taken from Schoen \cite{Sc14}.

\begin{theorem} \label{thm:reps}
For any convex set $A \subset \bbr$ and any $x \in \mathbb R$,
\[
r_{A+A}(x) \ll |A|^{2/3}.
\]
\end{theorem}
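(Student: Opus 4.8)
The plan is to read off $r_{A+A}(x)$ as the number of certain lattice points, to show those points lie in strictly convex position, and then to apply the classical theorem of Jarn\'{i}k bounding the number of lattice points on a convex curve.

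First I would fix notation. Write $A=\{a_1<\cdots<a_n\}$ with $n=\abs{A}$; the convexity of $A$ is equivalent to the assertion that secant slopes strictly increase, i.e.
\[
\frac{a_q-a_p}{q-p}<\frac{a_r-a_q}{r-q}\qquad\text{whenever }1\le p<q<r\le n.
\]
Fix $x\in\bbr$. Since $A$ is a set, an index $p$ with $a_p+a_q=x$ determines $a_q=x-a_p$ and hence $q$, and increasing $p$ strictly decreases the matching $q$. Thus the solutions may be listed as lattice points $(p_1,q_1),\dots,(p_k,q_k)\in[1,n]^2$ with $p_1<\cdots<p_k$ and $q_1>\cdots>q_k$, where $k=r_{A+A}(x)$.

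The one substantive step is to verify that these $k$ points are in strictly convex position. Take three consecutive ones. From $a_{p_i}+a_{q_i}=a_{p_{i+1}}+a_{q_{i+1}}$ one gets $a_{p_{i+1}}-a_{p_i}=a_{q_i}-a_{q_{i+1}}=:P>0$, and similarly $a_{p_{i+2}}-a_{p_{i+1}}=a_{q_{i+1}}-a_{q_{i+2}}=:Q>0$. Applying the displayed inequality once to $p_i<p_{i+1}<p_{i+2}$ and once to $q_{i+2}<q_{i+1}<q_i$ gives
\[
P\,(p_{i+2}-p_{i+1})<Q\,(p_{i+1}-p_i)\qquad\text{and}\qquad Q\,(q_i-q_{i+1})<P\,(q_{i+1}-q_{i+2});
\]
multiplying these and cancelling $PQ>0$ yields $(p_{i+2}-p_{i+1})(q_i-q_{i+1})<(p_{i+1}-p_i)(q_{i+1}-q_{i+2})$, which says exactly that the segment $(p_i,q_i)\to(p_{i+1},q_{i+1})$ has strictly larger slope than the segment $(p_{i+1},q_{i+1})\to(p_{i+2},q_{i+2})$. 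Hence $(p_1,q_1),\dots,(p_k,q_k)$ are the vertices of a strictly convex lattice polygon lying inside $[1,n]^2$.

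It then remains to invoke Jarn\'{i}k's theorem: a convex lattice polygon contained in a box of side length $n$ has only $O(n^{2/3})$ vertices (equivalently, a strictly convex arc of length $L$ carries $O(L^{2/3})$ lattice points, and our polygon has perimeter $O(n)$). This gives $r_{A+A}(x)=k\ll n^{2/3}=\abs{A}^{2/3}$. I do not expect a real obstacle here: the argument is two applications of the convexity of $A$ plus a black-box lattice-point estimate, and the only things needing care are the bookkeeping that $p$ determines $q$ (so the solutions genuinely form a monotone ``staircase'' of $k$ distinct lattice points) and citing the Jarn\'{i}k bound in whichever of its equivalent forms is most convenient.
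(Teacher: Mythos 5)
Your argument is correct, but it proves Theorem \ref{thm:reps} by a genuinely different route from the paper. The paper's proof is deliberately elementary and self-contained: after translating so that $x=0$ it sets $B=A\cap(-A)\cap\bbr_{>0}$, applies the Erd\H{o}s--Szekeres theorem to the sequence of gaps of $B$ to extract $\gg\abs{B}^{1/2}$ disjoint monotone runs of length $\gg\abs{B}^{1/2}$, and then uses Lemma \ref{lmm:divide} (a nested-interval counting lemma, which also yields Corollary \ref{cor-prog} on progression lengths) to show that each run forces $\gg\abs{B}$ elements of $A$ in pairwise disjoint regions, giving $\abs{A}\gg\abs{B}^{3/2}$. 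You instead encode the solutions of $a_p+a_q=x$ as a staircase of lattice points $(p_i,q_i)\in[1,n]^2$, verify via the strict increase of secant slopes that consecutive slopes of the staircase strictly decrease, and then invoke Jarn\'ik's theorem as a black box; your slope computation is correct (the two cross-multiplied inequalities and the cancellation of $PQ$ do give strictly decreasing slopes), and the reduction $k=r_{A+A}(x)$ is sound since $p$ determines $q$. The only point needing a little care is the final citation: the points form a strictly concave chain rather than literally the graph of a strictly convex function as in Theorem \ref{thm:jrnk}, so you should either reflect and interpolate by a strictly convex function through points with strictly increasing consecutive slopes, or quote the equivalent Jarn\'ik/Andrews form bounding the vertices of a convex lattice polygon in an $n\times n$ box by $O(n^{2/3})$ — a standard and harmless step which you flag. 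The trade-off: your proof is shorter but outsources the main content to Jarn\'ik's lattice-point bound (a connection the paper only exploits in Section \ref{sec-jarnik}, for the matching lower-bound construction), whereas the paper's Erd\H{o}s--Szekeres argument is elementary, avoids lattice-point machinery, and produces Lemma \ref{lmm:divide} as a reusable tool.
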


The proof in \cite{Sc14} is elementary. We give an alternative elementary proof, using the Erd\H{o}s-Szekeres Theorem.

\begin{theorem}[Erd\H{o}s-Szekeres \cite{ErSz35}] \label{thm:ES} 
In any sequence of real numbers of length $m$ there exists a monotone subsequence of length $\floor{\sqrt{m}}$.


\end{theorem}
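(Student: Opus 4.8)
The plan is to prove the Erd\H{o}s--Szekeres Theorem by the standard pigeonhole argument, assigning to each term of the sequence a pair of statistics and showing these pairs are all distinct. Write the sequence as $a_1,\dots,a_m$. For each index $i$ I would define $u_i$ to be the length of the longest non-decreasing subsequence ending at position $i$, and $d_i$ to be the length of the longest non-increasing subsequence ending at position $i$. The goal is to show that if no monotone subsequence is long, then there are too few available pairs $(u_i,d_i)$ to accommodate all $m$ indices.

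The key step is to prove that the map $i\mapsto(u_i,d_i)$ is injective. I would fix two indices $i<j$ and observe that at least one of $a_i\leq a_j$ or $a_i\geq a_j$ must hold. In the first case, appending $a_j$ to a longest non-decreasing subsequence ending at $i$ produces a longer one ending at $j$, so $u_j\geq u_i+1>u_i$; in the second case the symmetric argument gives $d_j>d_i$. Either way the pairs differ in at least one coordinate, so $(u_i,d_i)\neq(u_j,d_j)$.

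With injectivity in hand the conclusion is immediate. Suppose for contradiction that every monotone subsequence has length at most $\ell$. Then $u_i,d_i\in\{1,\dots,\ell\}$ for all $i$, so there are at most $\ell^2$ possible pairs. Since the $m$ pairs $(u_i,d_i)$ are distinct, we get $m\leq\ell^2$, whence $\ell\geq\sqrt{m}$. As $\ell$ is an integer this already forces $\ell\geq\ceiling{\sqrt m}\geq\floor{\sqrt m}$, so in fact there is a monotone (non-decreasing or non-increasing) subsequence of length at least $\floor{\sqrt m}$, as claimed.

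There is no serious obstacle here; the only point requiring care is the treatment of repeated values. Using \emph{weak} monotonicity (non-decreasing and non-increasing rather than strictly increasing and strictly decreasing) is what guarantees that for any $i<j$ at least one of the two comparisons holds, which is exactly what drives the injectivity argument. If one instead insisted on strict monotonicity, equal terms would need a separate tie-breaking convention; since the statement as worded only asks for a monotone subsequence, the weak version suffices and keeps the argument clean.
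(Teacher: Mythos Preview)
The paper does not actually prove Theorem~\ref{thm:ES}; it simply quotes the result with a citation to \cite{ErSz35} and then uses it as a black box. Your argument is the standard (Seidenberg-style) pigeonhole proof and is correct: the injectivity of $i\mapsto(u_i,d_i)$ is exactly the point, and your use of weak monotonicity handles repeated values cleanly.

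One cosmetic remark: the phrase ``suppose for contradiction'' is slightly off, since you never actually derive a contradiction---you directly prove $\ell\geq\floor{\sqrt{m}}$. It reads more naturally to let $\ell$ denote the maximum length of a monotone subsequence and conclude $m\leq\ell^2$, hence $\ell\geq\ceiling{\sqrt{m}}\geq\floor{\sqrt{m}}$. This is purely stylistic; the mathematics is sound.
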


Removing such a subsequence from $S$ and applying this result again, and repeating this process, we have the following corollary.

\begin{corollary}\label{cor:ES}
Let $S$ be a sequence of real numbers of length $m$. There exist $t\gg m^{1/2}$ many disjoint monotone subsequences $U_1,\ldots,U_t$, each of length $\gg m^{1/2}$. 
\end{corollary}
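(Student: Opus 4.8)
The plan is to apply the Erd\H{o}s--Szekeres theorem iteratively, stripping off one monotone subsequence at a time, and then to stop before the remaining sequence gets too short. Concretely: start with $S_0 = S$ of length $m$, and for $i\geq 0$, as long as $|S_i| \geq m/2$, apply Theorem~\ref{thm:ES} to extract a monotone subsequence $U_{i+1} \subseteq S_i$ of length $\floor{\sqrt{|S_i|}} \geq \floor{\sqrt{m/2}}$; set $S_{i+1} = S_i \setminus U_{i+1}$. Each $U_{i+1}$ has length $\gg m^{1/2}$, and the $U_i$ are disjoint by construction.

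The only thing to verify is that we can repeat this $t \gg m^{1/2}$ times before $|S_i|$ drops below $m/2$. Each step removes at most $|S_i| \leq m$ elements, but more usefully each step removes \emph{at least} $\floor{\sqrt{m/2}} \gg m^{1/2}$ elements (since $|S_i| \geq m/2$ at the time of extraction). Hence after $t$ steps we have removed at most $m$ elements total only if $t \cdot O(m^{1/2}) \leq m/2$ is violated; turning this around, we can certainly continue for $t \geq c m^{1/2}$ steps for a suitable absolute constant $c > 0$ before the running total removed exceeds $m/2$, i.e.\ before $|S_i| < m/2$. This gives the claimed $t \gg m^{1/2}$ disjoint monotone subsequences each of length $\gg m^{1/2}$.

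There is no real obstacle here; the statement is a routine consequence of iterating Theorem~\ref{thm:ES}. The only mild care needed is the bookkeeping to ensure both parameters ($t$ and the common length bound) come out as $\gg m^{1/2}$ simultaneously — this is why one stops the iteration at $|S_i| \approx m/2$ rather than running it all the way down: stopping early keeps every extracted subsequence of length $\gg m^{1/2}$, while the "at least $\gg m^{1/2}$ removed per step" bound forces the number of steps to also be $\gg m^{1/2}$. (One could equally phrase it with any fixed fraction of $m$ in place of $m/2$.)
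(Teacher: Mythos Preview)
Your approach is exactly the paper's: iterate Theorem~\ref{thm:ES}, removing one monotone subsequence at a time, and the paper's proof is nothing more than the one-line remark that this iteration yields the corollary. One small expository slip: the bound you label ``more useful'' --- that each step removes \emph{at least} $\gg m^{1/2}$ elements --- points the wrong way for the step-count argument. What you actually need (and implicitly use via the $O(m^{1/2})$ in the next sentence) is that each step removes \emph{at most} $\lfloor\sqrt{|S_i|}\rfloor\leq \sqrt{m}$ elements, so that $t\sqrt{m}\geq m/2$ forces $t\gg m^{1/2}$ once the running total passes $m/2$.
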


We will also require the following property of convex sets.

\begin{lemma}\label{lmm:divide}
    Let $A\subset \bbr$ be a convex set and $\{a_1<b_1\leq a_2<b_2\leq\cdots \leq a_k<b_k\}\subseteq A$. If 
    \[b_{i+1}-a_{i+1}\leq b_i-a_i\]
    for $1\leq i<k$ then $$\left|A\cap \left(\bigcup_{i=1}^k \left(a_i,b_i\right)\right)\right|\geq\frac{k(k-1)}{2}.$$
In particular, $k\ll \abs{A}^{1/2}$.
\end{lemma}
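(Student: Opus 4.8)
The plan is to exploit convexity to show that each interval $(a_i, b_i)$ must contain a substantial number of elements of $A$, with the count increasing as $i$ increases, because the hypothesis forces the gap structure of $A$ to become finer and finer as we move left to right through these intervals. The key observation is that convexity means the consecutive differences $d_j = a_{j+1} - a_j$ are strictly increasing, so in any window of $A$ of a fixed "width" (in the sense of $b - a$), the number of elements of $A$ strictly inside is controlled from below by comparing $b-a$ against the local gap sizes.

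**First I would** set up the counting as follows. For each $i$, let $N_i = |A \cap (a_i, b_i)|$, so that $A \cap [a_i, b_i] = \{a_i = c_0 < c_1 < \cdots < c_{N_i} < c_{N_i+1} = b_i\}$ with $N_i + 1$ consecutive gaps summing to $b_i - a_i$. The heart of the argument is the claim that $N_{i+1} \geq N_i + 1$ for each $i$ (which immediately gives $N_i \geq i - 1$, or better, $N_i \geq N_1 + (i-1) \geq i-1$, and hence $\sum_{i=1}^k N_i \geq \sum_{i=1}^k (i-1) = \frac{k(k-1)}{2}$, with the "in particular" clause following since $|A| \geq \frac{k(k-1)}{2}$ forces $k \ll |A|^{1/2}$). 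To prove $N_{i+1} \geq N_i + 1$: the interval $[a_{i+1}, b_{i+1}]$ lies entirely to the right of (or abutting) $[a_i, b_i]$ since $b_i \leq a_{i+1}$, so by strict convexity every consecutive gap of $A$ inside $[a_{i+1}, b_{i+1}]$ is strictly larger than every consecutive gap of $A$ inside $[a_i, b_i]$. If $[a_i,b_i]$ has $N_i+1$ gaps summing to $b_i - a_i$, the largest such gap is at least $\frac{b_i - a_i}{N_i + 1}$; meanwhile $[a_{i+1},b_{i+1}]$ has $N_{i+1}+1$ gaps each exceeding that value, so $b_{i+1} - a_{i+1} > (N_{i+1}+1) \cdot \frac{b_i - a_i}{N_i + 1}$. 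Combined with $b_{i+1} - a_{i+1} \leq b_i - a_i$, this yields $N_{i+1} + 1 < N_i + 1$...

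**Wait, that's the wrong direction**, so I need to be more careful: I would instead compare the \emph{smallest} gap in $[a_{i+1}, b_{i+1}]$, which exceeds the \emph{largest} gap in $[a_i, b_i]$, i.e.\ exceeds $\max_j (c_{j+1} - c_j) \geq \frac{b_i - a_i}{N_i+1}$. Then $b_{i+1} - a_{i+1} \geq (N_{i+1}+1)\cdot(\text{smallest gap}) > (N_{i+1}+1)\cdot\frac{b_i-a_i}{N_i+1}$ — but I want a \emph{lower} bound on $N_{i+1}$, so I should instead use the upper bound $(N_{i+1}+1) \cdot (\text{largest gap in } [a_i,b_i])$ against... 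Actually the clean route: the $N_i + 1$ gaps filling $[a_i,b_i]$ are strictly smaller than the $N_{i+1}+1$ gaps filling $[a_{i+1},b_{i+1}]$, and if $N_{i+1} \leq N_i$ then since we have at most as many strictly-larger pieces summing to at most $b_i - a_i$ — this is impossible because matching the larger pieces one-to-one with a subset of the smaller pieces (there are enough of them since $N_{i+1}+1 \le N_i+1$) forces $b_{i+1}-a_{i+1} > b_i - a_i$. So $N_{i+1} \geq N_i + 1$, as claimed. **The main obstacle** is getting this comparison argument stated precisely — making sure the indexing of gaps, the role of the endpoints $a_i, b_i$ (which need not themselves be consecutive in $A$), and the strict-versus-nonstrict inequalities all line up — but once the bookkeeping is fixed the inequality $N_{i+1} \ge N_i+1$ is essentially forced by convexity, and the sum $\sum N_i \ge \binom{k}{2}$ drops out immediately.
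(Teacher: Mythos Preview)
Your overall strategy is the paper's strategy, but you have the key inequality backwards, and your final ``clean route'' argument does not actually prove what you claim.

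The correct direction is $N_i > N_{i+1}$, not $N_{i+1} \geq N_i + 1$. Convexity forces gaps to \emph{grow} as we move right, and the interval lengths $b_i - a_i$ are non-increasing, so later intervals must contain \emph{fewer} points of $A$, not more. (A quick sanity check: take $b_k - a_k$ to be a single consecutive gap of $A$, so $N_k = 0$; your inequality would then force every $N_i = 0$.) Your matching argument fails because matching the $N_{i+1}+1$ larger gaps with $N_{i+1}+1$ of the $N_i+1$ smaller gaps only shows $b_{i+1}-a_{i+1}$ exceeds a \emph{partial} sum of left-gaps, which is $\leq b_i - a_i$; no contradiction follows.

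Ironically, the inequality you wrote down and then discarded is exactly right. From
\[
b_i - a_i \;\geq\; b_{i+1}-a_{i+1} \;>\; (N_{i+1}+1)\cdot \frac{b_i - a_i}{N_i + 1}
\]
you get $N_i + 1 > N_{i+1} + 1$, i.e.\ $N_i > N_{i+1}$. This gives $N_i \geq k - i$ (by backward induction from $N_k \geq 0$), and then $\sum_{i=1}^k N_i \geq \sum_{i=1}^k (k-i) = \binom{k}{2}$, which is precisely the paper's proof. So the fix is simply to reverse the direction of your monotonicity claim and keep the averaging inequality you already had.
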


\begin{proof}
    Let $n_i:=\lvert A\cap \left(a_i,b_i\right)\rvert$ for all $i\in[1,k]$. The largest distance between consecutive points of $A$ in $[a_i,b_i]$, denoted by $d_i^{\textrm{max}}$, is at least $\frac{b_i-a_i}{n_i+1}$, while the smallest distance between consecutive points of $A$ in $[a_{i+1},b_{i+1}]$, denoted by $d_{i+1}^{\textrm{min}}$, is at most $\frac{b_{i+1}-a_{i+1}}{n_{i+1}+1},$ for all $i\in[1,k-1]$. Since $A$ is convex 
    $$\frac{b_i-a_i}{n_i+1}\leq d_i^{\textrm{max}}<d_{i+1}^{\textrm{min}}\leq\frac{b_{i+1}-a_{i+1}}{n_{i+1}+1}\leq \frac{b_i-a_i}{n_{i+1}+1},$$ and therefore $n_i>n_{i+1}$. By induction it follows that, for $1\leq i\leq k$, we have $n_{i}\geq k-i$, and hence $$\left|A\cap \left(\bigcup_{i=1}^k \left(a_i,b_i\right)\right)\right|=\sum_{i=1}^k n_i\geq \sum_{i=1}^k (k-i)=\sum_{j=1}^{k-1} j=\frac{k(k-1)}{2}.$$
\end{proof}

We digress slightly to note the following consequence of Lemma~\ref{lmm:divide}, bounding the length of any arithmetic progression in a convex set, which follows taking $P=\{a_1<\cdots<a_{k+1}\}$ and $b_i=a_{i+1}$.
\begin{corollary}\label{cor-prog}
If $A\subset \bbr$ is a convex set and $P\subseteq A$ is an arithmetic progression then $\abs{P}\ll \abs{A}^{1/2}$.
\end{corollary}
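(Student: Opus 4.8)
The plan is to derive Corollary~\ref{cor-prog} as a direct application of Lemma~\ref{lmm:divide} by choosing the configuration of points appropriately. Write the arithmetic progression as $P=\{a_1<a_2<\cdots<a_{k+1}\}\subseteq A$ with common difference $\delta>0$, so that $a_{i+1}-a_i=\delta$ for all $1\leq i\leq k$. Set $b_i=a_{i+1}$ for $1\leq i\leq k$. Then the points satisfy $a_1<b_1=a_2<b_2=a_3<\cdots$, which matches the chain $a_1<b_1\leq a_2<b_2\leq\cdots\leq a_k<b_k$ required by the lemma (with equality in the intermediate inequalities). Moreover $b_i-a_i=\delta$ for every $i$, so the monotonicity hypothesis $b_{i+1}-a_{i+1}\leq b_i-a_i$ holds trivially (with equality).

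Applying Lemma~\ref{lmm:divide} with this choice then gives $k\ll|A|^{1/2}$, and since $|P|=k+1$ we conclude $|P|\ll|A|^{1/2}$. That is essentially the entire argument.

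I do not expect any real obstacle here, since the statement is explicitly billed as a ``digression'' and the work has already been done in Lemma~\ref{lmm:divide}. The only point requiring a moment's care is that Lemma~\ref{lmm:divide} as stated wants strict inequalities $a_i<b_i$ (fine, since $\delta>0$) and allows equalities $b_i\leq a_{i+1}$ (here they are equalities), so the hypotheses are met verbatim. One might also note that the intervals $(a_i,b_i)=(a_i,a_{i+1})$ are pairwise disjoint and lie inside the convex hull of $A$, so the union on the left-hand side of the lemma's conclusion is contained in $A$, giving $\binom{k}{2}\leq|A|$ and hence $k\leq O(|A|^{1/2})$. No further estimates are needed.
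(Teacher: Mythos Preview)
Your proposal is correct and matches the paper's approach exactly: the paper also derives the corollary by writing $P=\{a_1<\cdots<a_{k+1}\}$, setting $b_i=a_{i+1}$, and invoking Lemma~\ref{lmm:divide}. The only minor slip is in your final sentence, where you say the union of the intervals is ``contained in $A$''; what you mean (and what makes the argument work) is that the set $A\cap\bigcup_i(a_i,b_i)$ is a subset of $A$, so its cardinality is at most $|A|$.
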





We now return to the task of proving Theorem~\ref{thm:reps}, an upper bound for $r_{A+A}$.
\begin{proof}[Proof of Theorem \ref{thm:reps}]
Translating $A$ by $-x/2$ (which preserves the convexity of $A$) if necessary we can assume without loss of generality that $x=0$. Let $B:= A \cap (-A)\cap\mathbb{R}_{>0}$, so that $r_{A+A}(x)\leq  \abs{B}+1$. It suffices therefore to bound $m=\abs{B}$.

Let $B=\{x_1<x_2<\cdots<x_m\}$ and $\ell_i:=x_{i+1}-x_i$. By Theorem~\ref{thm:ES} there are $t\gg m^{1/2}$ many disjoint monotone subsequences $U_1,\ldots,U_t$ of $(\ell_1,\ldots,\ell_{m-1})$, each of length $\abs{U_i}\gg m^{1/2}$. By Lemma~\ref{lmm:divide}, if $U_i=(\ell_{u_1},\ldots,\ell_{u_k})$ is non-increasing then
\[\abs{A\cap \brac{\bigcup_{j=1}^{k}(x_{u_j},x_{u_{j}+1})}}\gg k^2 \gg m.\]
On the other hand, if $U_i$ is non-decreasing then
\[\abs{A\cap \brac{\bigcup_{j=1}^{k}(-x_{u_{j}+1},-x_{u_j})}}\gg k^2 \gg m.\]
In either case, if
\[A_i = A\cap\brac{ \brac{\bigcup_{j=1}^{k}(-x_{u_{j}+1},-x_{u_j})}\cup\brac{\bigcup_{j=1}^{k}(x_{u_j},x_{u_{j}+1})}},\]
then we have, for all $1\leq i\leq t$, the lower bound $\abs{A_i} \gg m$. It remains to note that the $A_i$ are disjoint (since all the intervals $(x_i,x_{i+1})$ are disjoint subsets of $\mathbb{R}_{>0}$), and so
\[\abs{A}\geq \sum_{i=1}^t\abs{A_i}\gg \abs{B}^{3/2},\]
and thus $\abs{B} \ll \abs{A}^{2/3}$ as desired.
\end{proof}

Theorem \ref{thm:3APupper} now follows as a simple consequence of Theorem \ref{thm:reps}.

\begin{proof}[Proof of Theorem \ref{thm:3APupper}]
    We have
    \[
    T_3(A)= \sum_{x \in 2A}r_{A+A}(x) \ll |A|^{2/3}\sum_{x \in 2A}1 = |A|^{5/3}.
    \]
\end{proof}

Next, we will show that Schoen's upper bound for $r_{A+A}(x)$ is optimal. For this we need the following lemma, which shows that every sequence with non-decreasing distances can be transformed into a convex set by slightly moving the points between the non-increasing intervals.

\begin{lemma} \label{lmm:almostAPs}
Let $a<b<c$ be positive real numbers and $B:=\{x_0<x_1<\cdots<x_k\}$ be an increasing sequence of real numbers such that
$$
x_{i}-x_{i-1}=
\begin{cases}
a & \text{if } i= 1,\\
b & \text{if } 1<i<k\textrm{, and}\\
c & \text{if } i= k.
\end{cases}
$$
There exists a convex set $C$ of size $|B|$ which contains $x_0$, $x_1$, $x_{k-1}$, and $x_k$.
\end{lemma}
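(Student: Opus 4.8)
The plan is a short perturbation argument. The set $B$ fails to be convex only because its interior points $x_1,\dots,x_{k-1}$ form an arithmetic progression with common difference $b$, so the ``middle'' gaps are all equal rather than strictly increasing; since $a<b<c$ there is slack to tilt this middle block slightly, replacing the constant gap $b$ by a very short arithmetic progression of gaps centred at $b$, without moving the endpoints $x_1$ and $x_{k-1}$ and keeping every gap strictly between $a$ and $c$.

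First I would dispose of the degenerate cases: if $k\le 3$ there are no interior points to move and $B$ is already convex (its consecutive differences are $a<c$ if $k=2$ and $a<b<c$ if $k=3$), so one may take $C=B$. Assume henceforth $k\ge 4$. For a parameter $\delta>0$ to be fixed later, set
\[
g_i' := b + \delta\Bigl(i-\tfrac{k+1}{2}\Bigr)\qquad (2\le i\le k-1),
\]
define $x_i' := x_1 + \sum_{j=2}^{i} g_j'$ for $2\le i\le k-1$, and put $x_0':=x_0$, $x_1':=x_1$, $x_k':=x_k$. The one algebraic observation to record is that the $\delta$-terms cancel by symmetry about $\tfrac{k+1}{2}$, so $\sum_{j=2}^{k-1} g_j' = (k-2)b = x_{k-1}-x_1$, whence $x_{k-1}'=x_{k-1}$. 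Thus $C:=\{x_0',x_1',\dots,x_{k-1}',x_k'\}$ contains $x_0,x_1,x_{k-1},x_k$ and, once we check the $x_i'$ are strictly increasing, has $|C|=k+1=|B|$.

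It then remains only to choose $\delta$ and verify convexity. The consecutive differences of $C$, in order, are $a$, then $g_2'<g_3'<\dots<g_{k-1}'$ (strictly increasing because $\delta>0$), then $c$; so $C$ is convex precisely when $a<g_2'$ and $g_{k-1}'<c$, i.e. when $b-\delta\tfrac{k-3}{2}>a$ and $b+\delta\tfrac{k-3}{2}<c$. Both hold as soon as $0<\delta<\tfrac{2}{k-3}\min(b-a,\,c-b)$, and for such $\delta$ all gaps are positive (each exceeds $a>0$), so the $x_i'$ are genuinely increasing. Fixing any admissible $\delta$ completes the proof. There is no real obstacle here; the only points needing a moment's care are the small-$k$ degeneracies and the symmetry cancellation that pins $x_{k-1}'$ to $x_{k-1}$.
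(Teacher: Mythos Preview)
Your proof is correct and follows essentially the same approach as the paper: both replace the constant middle gaps $b$ by a short arithmetic progression of gaps symmetric about $b$, use the symmetry to keep the total length fixed (so $x_{k-1}$ and $x_k$ are preserved), and choose the step small enough that the new gaps stay strictly between $a$ and $c$. The only cosmetic differences are that the paper fixes the step as $m/k$ with $m=\min(b-a,c-b)$ and splits into odd/even $k$, whereas you leave $\delta$ as a free parameter and handle all $k\ge 4$ uniformly via the shift $i-\tfrac{k+1}{2}$.
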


\begin{proof}
Let $m:=\min\{b-a,c-b\}$ and consider the set
$$
D=
\begin{cases}
\left\{b-\frac{m}{k}i\;|\;{i\in [-\frac{k-3}{2},\frac{k-3}{2}]}\right\} & \text{if } k \text{ is odd and }\\
\left\{b-\frac{m}{k}i\;|\;{i\in [-\frac{k-2}{2},\frac{k-2}{2}]\setminus\{0\}}\right\} & \text{if } k \text{ is even}.
\end{cases}
$$
Let $\{d_2<d_3<\cdots<d_{k-1}\}$ be $D$ written in increasing order and let $C:=\{y_0=x_0<y_1 <\cdots <y_k\}$ be the increasing sequence of real numbers, such that their gaps satisfy
$$
y_{i}-y_{i-1}=
\begin{cases}
a & \text{if } i= 1,\\
d_i & \text{if } 1<i<k\textrm{, and}\\
c & \text{if } i= k.
\end{cases}
$$
Since $y_1-y_0=a$ this set contains $x_0$ and $x_1$. Furthermore, since
\[y_k-y_0=a+\sum_{i=2}^{k-1}d_i+c=a+(k-2)b+c=x_k-x_0,\]
this set also contains $x_k$, and so since $y_k-y_{k-1}=c$ it also contains $x_{k-1}$. Finally, this set is clearly convex, since
\[a\leq b-m<d_2<d_3<\cdots <d_{k-1}<b+m\leq c.\]
\end{proof}

\begin{theorem} \label{thm:3APLower}
There exist arbitrarily large finite convex sets $A\subseteq\mathbb{R}$ and $x\in A$ such that $x$ is the centre of $\Omega(|A|^{2/3})$ many $3$-term arithmetic progressions.
\end{theorem}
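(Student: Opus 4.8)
The plan is to reverse-engineer the upper bound argument. For $r_{A+A}(x)$ to be large (of order $|A|^{2/3}$), the proof of Theorem \ref{thm:reps} shows that we need roughly $m^{1/2}$ disjoint monotone runs of gaps, each of length $m^{1/2}$, in a convex set of size $\Theta(m^{3/2})$; here $m$ is the target value of $r_{A+A}(x)$. So I would set $m = \Theta(N^{2/3})$ where $N = |A|$, and try to build, near a centre point (say $0$), a set $B = A \cap (-A) \cap \mathbb{R}_{>0}$ of size $\asymp m$ whose consecutive gaps realise such monotone runs, while the ``filler'' points needed to make the whole thing convex only cost $O(m^{3/2}) = O(N)$ extra elements.

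Concretely, I would first construct an auxiliary set $B = \{x_1 < \cdots < x_m\} \subset \mathbb{R}_{>0}$ whose gap sequence $(\ell_i)$, with $\ell_i = x_{i+1}-x_i$, is a concatenation of $\asymp m^{1/2}$ blocks, where block $j$ is a non-increasing run of $\asymp m^{1/2}$ nearly-equal gaps, and the gaps are arranged so that consecutive blocks roughly ``interleave'' in the sense needed by Lemma \ref{lmm:divide} (each block's gaps dominated by the previous block's). The symmetric set $-B$ handles the non-decreasing case automatically, and $0 \in A$ gives $r_{A+A}(0) \geq |B| = m$. The point is that $B \cup (-B)$ by itself is not convex, but Lemma \ref{lmm:almostAPs} tells us exactly how to fix this: within each near-arithmetic-progression block (which after the two boundary gaps $a < b < c$ pattern matches the hypothesis of that lemma) we may perturb the internal points and insert a controlled number of new points to restore convexity, and stitching the perturbed blocks together in increasing order of gap size gives a genuine convex set. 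I would apply Lemma \ref{lmm:almostAPs} block-by-block (or a mild generalisation of it allowing several blocks to be glued), and then take $A$ to be the union with its negative reflection, arranged so that the whole gap sequence is strictly increasing across $0$.

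The key accounting is: each block contributes $\asymp m^{1/2}$ points of $B$ but, to become convex, needs $O(m^{1/2} \cdot m^{1/2}) = O(m)$ filler points spread across the gaps — and with $\asymp m^{1/2}$ blocks this is $O(m^{3/2}) = O(N)$ total, matching $|A| = \Theta(N)$; crucially none of the filler points lie in $B$, so they do not inflate $r_{A+A}(0)$. One then checks $|B| \gg |A|^{2/3}$, which is the claimed bound. The main obstacle I anticipate is the bookkeeping of the convexity condition \emph{globally}: Lemma \ref{lmm:almostAPs} handles one block with a prescribed $a<b<c$ boundary behaviour, but here I need the last gap of block $j$ to match up with the first gap of block $j+1$ so that after perturbation the entire sequence of consecutive differences of $A$ is strictly monotone; this requires choosing the block gap-lengths along a geometric-type scale and being careful that the perturbations (of size $\asymp m^{1/2}$ per block, each move of order a small fraction of the gap) never collide with the monotonicity forced at block boundaries. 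Getting the parameters $a = a_j$, $b = b_j$, $c = c_j$ for the $j$-th block to telescope correctly — so that $\sum_j (\text{block length})$ lands at the right place and the perturbed gaps form one increasing run — is the delicate part; everything else is the same Erd\H{o}s--Szekeres/Lemma \ref{lmm:divide} counting as in the upper bound, run in reverse.
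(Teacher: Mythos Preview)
Your parameter count ($m^{1/2}$ blocks, each needing $O(m)$ fillers, for $O(m^{3/2})$ total) is correct at the heuristic level, but the ``delicate part'' you flag at the end is essentially the entire content of the proof, and your sketch does not resolve it. Two concrete problems: first, ``each block's gaps dominated by the previous block's'' is the wrong direction --- if the whole gap sequence of $B$ on the positive side is non-increasing across blocks as well as within, then Lemma~\ref{lmm:divide} applied to the full sequence forces $\gg m^2$ filler points, not $m^{3/2}$. Second, Lemma~\ref{lmm:almostAPs} does not insert points; it only perturbs the interior of a block whose endpoint gaps $a<b<c$ are already fixed, so it cannot be used to ``insert a controlled number of new points to restore convexity''. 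The actual insertion --- deciding how many filler points go in each gap of $B$ --- is the hard step, and you have not said how to do it so that the resulting sub-gap sequence is globally monotone on \emph{both} the positive and the reflected negative side simultaneously.

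The paper avoids the block viewpoint entirely. The observation is that if gap $\ell_i$ is subdivided into $b_i$ equal pieces on the left half and $a_i$ equal pieces on the right (reflected) half, then global convexity reduces to $\ell_i/b_i$ being increasing and $\ell_i/a_i$ being decreasing, which forces the ratios $a_i/b_i$ to be strictly increasing. So the real problem is: find $m$ pairs of positive integers $(a_i,b_i)$ with $a_i/b_i$ strictly increasing and $\sum_i(a_i+b_i)=O(m^{3/2})$. This is solved by taking all reduced fractions $a/b$ with $1\le a<b\le n$ (so $m=\Theta(n^2)$ and each $a_i+b_i\le 2n$), then choosing $\ell_i$ to satisfy $a_i/a_{i+1}<\ell_i/\ell_{i+1}<b_i/b_{i+1}$. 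Only after this is Lemma~\ref{lmm:almostAPs} invoked, as a cosmetic step to turn the resulting non-decreasing sub-gap sequence into a strictly increasing one. Your reverse-engineering of the Erd\H{o}s--Szekeres argument is good intuition for why $|A|^{2/3}$ is the right target, but the Farey-fraction parametrisation is the missing idea that makes the telescoping work.
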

\begin{proof}
Let $n\in\mathbb{N}$ and 
\[Q:=\left\{\frac{a}{b} : 1\leq a<b\leq n\right\}.\]
Note that $m:=|Q|=\Theta(n^2)$. We order the elements of $Q$ as 
\[0<\frac{a_1}{b_1}<\frac{a_2}{b_2}<\cdots<\frac{a_m}{b_m}<1,\]
so that
$$\frac{a_i}{a_{i+1}}<\frac{b_i}{b_{i+1}}.$$ We can therefore inductively choose a sequence of real numbers $\ell_1,\ldots,\ell_m$ with $\ell_1=1$ and 
\begin{align}
\frac{a_i}{a_{i+1}}<\frac{\ell_i}{\ell_{i+1}}<\frac{b_i}{b_{i+1}}.\label{eqn:ell}
\end{align}

Let $B:=\{x_1<\cdots<x_m<x_{m+1}<x_{m+2}<\cdots<x_{2m+1}\}$ be an increasing sequence of real numbers such that
$$
x_{i+1}-x_{i}=
\begin{cases}
\ell_i & \text{if } i\leq m\textrm{ and}\\
\ell_{2m-i+1} & \text{if } i\geq m+1.
\end{cases}
$$
Define $x:= x_{m+1}$. Since $B$ is symmetric around $x$ (that is, $x_i+x_{2m+1-i}=x$ for $1\leq i\leq m$) we know that $r_{B+B}(2x)\geq m$.

We will construct a convex set $A\supseteq B$. For $i\leq m$ we add $b_i-1$ points in the interval $(x_{i+1},x_i)$ and $a_i-1$ points in the interval $(x_{2m-i+1},x_{2m-i+2})$. Note that both of these intervals are of length $\ell_i$. We choose these points such that the intervals are split into equal parts of length $\frac{\ell_i}{b_i}$ and $\frac{\ell_i}{a_i}$, respectively. By \eqref{eqn:ell} we have, for $1\leq i\leq m$,
\[\frac{\ell_i}{b_i}<\frac{\ell_{i+1}}{b_{i+1}}\textrm{  and  }\frac{\ell_{i+1}}{a_{i+1}}<\frac{\ell_{i}}{a_{i}}.\]
These inequalities, coupled with the fact that $\frac{\ell_m}{b_m}<\frac{\ell_m}{a_m}$, ensure that the consecutive distances are non-decreasing. Using Lemma~\ref{lmm:almostAPs} we can slightly move the newly-added points to obtain a convex set $A$ containing $B$. Finally, since $|A|=|B|+\sum_{i=1}^m(a_i-1)+\sum_{i=1}^m(b_i-1)\leq |B|+ O(mn)=O(n^3)$ we have $$r_{A+A}(2x)\geq r_{B+B}(2x)\geq m\gg |A|^{2/3}.$$

\end{proof}

\begin{corollary}
There exist  finite convex sets $A\subseteq\mathbb{R}$ and $x\in\mathbb{R}$ such that $r_{A+A}(x)\gg |A|^{2/3}$.
\end{corollary}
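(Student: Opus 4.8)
The plan is to deduce this corollary directly from Theorem~\ref{thm:3APLower}. That theorem already produces, for each $n$, a convex set $A$ together with a point $x \in A$ that is the centre of $\Omega(|A|^{2/3})$ three-term arithmetic progressions in $A$. I would simply observe that a three-term progression centred at $x$ is precisely a pair $(a,c) \in A \times A$ with $a + c = 2x$, so the number of such progressions is exactly $r_{A+A}(2x)$. Hence Theorem~\ref{thm:3APLower} already gives $r_{A+A}(2x) \gg |A|^{2/3}$, and taking $x' = 2x \in \mathbb{R}$ yields a convex set $A$ and a real number $x'$ with $r_{A+A}(x') \gg |A|^{2/3}$, as required.

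Concretely, I would write: let $A$ and $x$ be as provided by Theorem~\ref{thm:3APLower}, so that there are $\gg |A|^{2/3}$ triples $(b, a, c) \in A^3$ with $2a = b + c$ and $a = x$. Each such triple corresponds to a representation $b + c = 2x$ with $b, c \in A$, and distinct triples give distinct ordered pairs $(b,c)$. Therefore $r_{A+A}(2x) \geq$ (number of $3$-APs centred at $x$) $\gg |A|^{2/3}$. Setting the target element to be $2x$ completes the argument.

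There is essentially no obstacle here: the statement is a near-tautological restatement of Theorem~\ref{thm:3APLower}, the only content being the passage from "$x$ is the centre of many $3$-APs" to "$r_{A+A}(2x)$ is large", which is immediate from the definitions of $T_3$ and $r_{A+A}$. One could even remark that the construction shows Schoen's bound in Theorem~\ref{thm:reps} is tight up to the implied constant, so no power of $|A|^{o(1)}$ is lost on either side.
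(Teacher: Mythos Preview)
Your proposal is correct and matches the paper's approach exactly: the paper states this as an immediate corollary of Theorem~\ref{thm:3APLower} with no separate proof, and indeed the final line of that theorem's proof already reads $r_{A+A}(2x)\geq r_{B+B}(2x)\geq m\gg |A|^{2/3}$. Your observation that a $3$-AP centred at $x$ is precisely a representation of $2x$ in $A+A$ is all that is needed.
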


Curiously, the problem of upper bounding the number of representations of an element of the difference set determined by a convex set has a very different answer. Schoen \cite{Sc14} gave a construction of a convex set with $r_{A-A}(x) \gg |A|$ for some $x \neq 0$. The best possible multiplicative constant for this problem was precisely determined by Roche-Newton and Warren \cite{RoWa22}.

\section{A connection to Jarn\'ik's theorem}\label{sec-jarnik}

The following reformulation of a classical result of Jarn\'{i}k \cite{Ja26} answers the question of how many grid points a convex curve can contain.

\begin{theorem}[Jarn\'ik \cite{Ja26}] \label{thm:jrnk}
    Let $f$ be a strictly convex function and $n\in\mathbb{N}$. The graph of $f$ intersects the integer grid $[n]^2$ in $O(n^{2/3})$ points.
\end{theorem}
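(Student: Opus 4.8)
The plan is to reduce the statement to the machinery already developed in Section~\ref{sec-upper}, in particular Lemma~\ref{lmm:divide}. Suppose the graph of a strictly convex function $f$ passes through $N$ points of $[n]^2$, say $(p_1, q_1), \ldots, (p_N, q_N)$ with $p_1 < p_2 < \cdots < p_N$; by strict convexity the slopes of the chords
\[
s_i = \frac{q_{i+1} - q_i}{p_{i+1} - p_i}
\]
are strictly increasing in $i$. The goal is to show $N \ll n^{2/3}$. First I would pass to the consecutive-difference vectors: write $u_i = p_{i+1} - p_i \geq 1$ and $v_i = q_{i+1} - q_i$, which are positive integers (strict convexity plus integrality forces $v_i \geq 1$ as well, after possibly reflecting so that $f$ is increasing on the relevant range — one can split into a decreasing part and an increasing part and handle each separately, losing only a factor of $2$). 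Since $\sum u_i \leq n$ and $\sum v_i \leq n$, most of the $u_i$ and $v_i$ are small.

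The key step is an Erd\H{o}s--Szekeres / rearrangement argument of exactly the type used in the proof of Theorem~\ref{thm:reps}. The sequence of slopes $s_i = v_i / u_i$ is increasing. I would like to find many indices $i$ where $u_i$ is nondecreasing along a block — but $u_i$ itself need not be monotone. Instead, apply Corollary~\ref{cor:ES} to the sequence $(u_1, \ldots, u_{N-1})$ to extract $t \gg N^{1/2}$ disjoint monotone subsequences, each of length $\gg N^{1/2}$. On a block where $u$ is nonincreasing, the corresponding $x$-intervals $[p_{i_j}, p_{i_j} + u_{i_j}]$ on the $x$-axis have nonincreasing lengths and are essentially the setup of Lemma~\ref{lmm:divide} for the convex set $\{p_1, \ldots, p_N\}$... but that set is just a set of integers, which is not convex. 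So the honest approach is the dual one: one should instead work directly with a convex \emph{set of reals} built from the curve.

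The cleanest route, and the one I expect to take, is to deduce Theorem~\ref{thm:jrnk} by citing Theorem~\ref{thm:reps} after constructing an auxiliary convex set. Given the $N$ lattice points on the graph, observe that for a run of consecutive points, $f$ restricted to $\{p_1, \ldots, p_N\}$ is itself a strictly convex sequence, so $A := \{q_1, \ldots, q_N\} = \{f(p_1), \ldots, f(p_N)\}$ is \emph{almost} a convex set in the sense that its consecutive differences $v_i$ are increasing precisely when the $p_i$ are equally spaced — which they are not. The fix is to note that strict convexity of $f$ gives $s_i < s_{i+1}$, i.e. $v_i u_{i+1} < v_{i+1} u_i$; pair this with the constraints $\sum u_i, \sum v_i \leq n$ and run the following counting argument. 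For a threshold $\lambda$, the number of $i$ with $u_i \geq \lambda$ is at most $n/\lambda$, and likewise for $v_i$; so at least $N - 1 - 2n/\lambda$ of the indices have both $u_i < \lambda$ and $v_i < \lambda$, hence $s_i = v_i/u_i$ takes values that are ratios of integers below $\lambda$, of which there are $O(\lambda^2)$ distinct values, but the $s_i$ are \emph{strictly increasing} so all distinct — forcing $N - 1 - 2n/\lambda \ll \lambda^2$. Optimizing $\lambda \asymp n^{1/3}$ yields $N \ll n^{2/3}$.

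The main obstacle is handling the interface between the two constraints cleanly: one has to be careful that strict monotonicity of the slopes is genuinely available (which requires $f$ strictly convex and the points distinct, both given) and that the pigeonhole on $\{u_i < \lambda, v_i < \lambda\}$ is set up so that the surviving slopes really are forced into an $O(\lambda^2)$-element set; the bookkeeping of the $-1$'s and the split into monotone pieces of $f$ is routine but must be stated. I would present the proof in the slopes-and-ratios form above rather than via Lemma~\ref{lmm:divide}, since it is shorter and self-contained, though remarking that it is morally the same rearrangement phenomenon underlying Theorem~\ref{thm:reps}.
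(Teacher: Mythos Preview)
The paper does not actually prove Theorem~\ref{thm:jrnk}; it is stated with a citation to Jarn\'ik~\cite{Ja26} and is only invoked as background for the discussion of Jarn\'ik's construction in Section~\ref{sec-jarnik}. So there is no ``paper's own proof'' to compare against.

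That said, the argument you settle on in the second half of your proposal --- the slopes-and-ratios count --- is correct and is essentially the classical proof. The key points are exactly as you identify: strict convexity forces the chord slopes $s_i=v_i/u_i$ to be pairwise distinct; the budget constraints $\sum u_i\le n$ and $\sum|v_i|\le n$ limit the number of indices with $u_i\ge\lambda$ or $|v_i|\ge\lambda$ to $O(n/\lambda)$; the remaining slopes lie in a set of size $O(\lambda^2)$; and optimizing $\lambda\asymp n^{1/3}$ gives $N\ll n^{2/3}$. The split into the decreasing and increasing portions of $f$ (to make the $v_i$ have constant sign) is handled correctly; one should also note that at most one $v_i$ can vanish, which is harmless.

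Your initial attempt to route the argument through Lemma~\ref{lmm:divide} or Corollary~\ref{cor:ES} does not work for the reason you yourself flag: the abscissae $\{p_1,\ldots,p_N\}$ form an arbitrary set of integers, not a convex set, so Lemma~\ref{lmm:divide} does not apply to them. It is good that you abandoned that line; the direct counting argument is both shorter and the standard one.
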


In the same paper, Jarn\'{i}k showed that this result is optimal. That is, a construction was given of a convex curve containing $\Omega(n^{2/3})$ points from $[n]^2$. The purpose of this section is to sketch how this same construction can also be used to deduce our earlier Theorem \ref{thm:3APLower}. We begin by explaining Jarn\'{i}k's construction, and one may immediately observe some similarities between this and the construction in the proof of Theorem \ref{thm:3APLower}.



Let $n\in\mathbb{N}$ and let $Q:=\{\frac{a}{b} : 1\leq a, b\leq n\}\setminus \{1\}$. The exclusion of $1$ is necessary to get one central point $x$ for Theorem~\ref{thm:3APLower} and it will not change the asymptotics for Jarn\'ik's bound. As before we order $Q$ as
$$\frac{1}{n}=\frac{a_{-m}}{b_{-m}}<\frac{a_{-m+1}}{b_{-m+1}}<\cdots<\frac{a_{-1}}{b_{-1}}<1<\frac{a_{1}}{b_{1}}<\cdots<\frac{a_{m}}{b_{m}}=\frac{n}{1}.$$ 
Note that, by the symmetry of the ratio set, we have $a_i=b_{-i}$ for all $-n  \leq i \leq n$.

We then construct a set of points $\mathcal{X:=\{}X_{-m},X_{-m+1},\cdots,X_m\}$ by setting $X_0:=(0,0)$, $X_i-X_{i-1}=(a_i,b_i)$ and $X_{-i+1}-X_{-i}=(a_{-i},b_{-i})$ for $i\in[1,m]$. Let $f$ be any convex curve connecting $\mathcal{X}.$  $\mathcal{X}$ is a set of size $2m+1=\Theta(n^2)$ and is contained in the grid $[-mn,mn]^2$. Therefore, $f$ and $\mathcal{X}$ show the sharpness of Theorem~\ref{thm:jrnk}.

Now let $\mathcal{Y}$ be the set of points on $f$ having integral $y$-coordinate in the range $[-mn,mn]$. In particular $\mathcal{X}\subseteq\mathcal{Y}$. We map $\mathcal{Y}$ to the real line via  $\pi:(x,y)\mapsto x+y$ to obtain the convex set $\mathcal{Y}_0.$
For visual purposes it is easier to instead think of projecting onto the line $\{y=x\}$. This projection is given by $$(x,y)\mapsto\left(\frac{x+y}{2},\frac{x+y}{2}\right),$$ and is therefore just a rotated and stretched variant (see Figure~\ref{fig:jrnk}).
$\mathcal{Y}_0$ is a set of size $\leq 2mn+1$ and it contains the arithmetic progressions $(\pi(X_{-i}),\pi(X_0),\pi(X_i))$ for $i\in[1,m]$. This shows Theorem~\ref{thm:3APLower}, moreover, it is even a special case of the construction in its proof, where $$\frac{a_i}{a_{i+1}}<\frac{\ell_i}{\ell_{i+1}}=\frac{a_i+b_i}{a_{i+1}+b_{i+1}}<\frac{b_i}{b_{i+1}}.$$

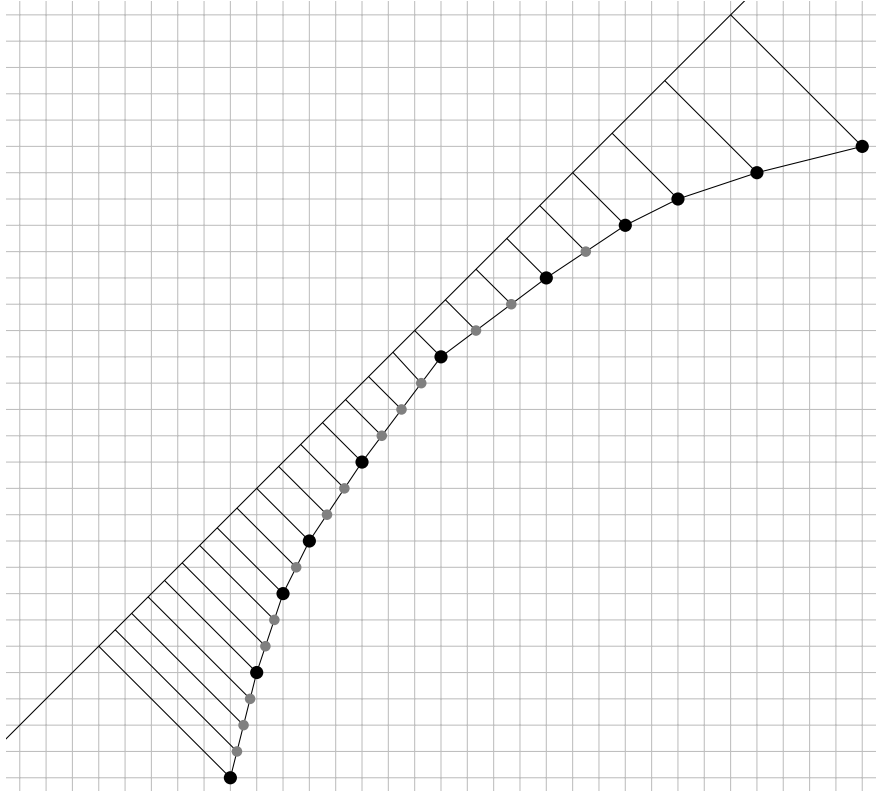
\begin{figure}[H]
\centering
\begin{tikzpicture}[yscale=1,scale=0.35]

\clip (-16.5,-16.5) rectangle + (33,30);

\draw[step=1.0,lightgray,thin,xshift=-0.0cm,yshift=-0.0cm] (-20,-20) grid (20,20);

\draw (-18,-16) -- (14,16);
\fill (0,0) circle (0.25);
\fill (4,3) circle (0.25);
\fill (-3,-4) circle (0.25);
\fill (7,5) circle (0.25);
\fill (-5,-7) circle (0.25);
\fill (9,6) circle (0.25);
\fill (-6,-9) circle (0.25);
\fill (12,7) circle (0.25);
\fill (-7,-12) circle (0.25);
\fill (16,8) circle (0.25);
\fill (-8,-16) circle (0.25);

\draw (-8,-16) -- (-7,-12);
\draw (-7,-12) -- (-6,-9);
\draw (-6,-9) -- (-5,-7);
\draw (-5,-7) -- (-3,-4);
\draw (-3,-4) -- (0,0);
\draw (0,0) -- (4,3);
\draw (4,3) -- (7,5);
\draw (7,5) -- (9,6);
\draw (9,6) -- (12,7);
\draw (12,7) -- (16,8);

\draw (0,0) -- (-1,1);
\draw (4,3) -- (2.5,4.5);
\draw (-3,-4) -- (-4.5,-2.5);
\draw (7,5) -- (5,7);
\draw (-5,-7) -- (-7,-5);
\draw (9,6) -- (6.5,8.5);
\draw (-6,-9) -- (-8.5,-6.5);
\draw (12,7) -- (8.5,10.5);
\draw (-7,-12) -- (-10.5,-8.5);
\draw (16,8) -- (11,13);
\draw (-8,-16) -- (-13,-11);

\draw (-7.75,-15) -- (-12.375,-10.375);
\draw (-7.5,-14) -- (-11.75,-9.75);
\draw (-7.25,-13) -- (-11.125,-9.125);
\draw (-6.67,-11) -- (-9.83,-7.83);
\draw (-6.33,-10) -- (-9.167,-7.167);
\draw (-5.5,-8) -- (-7.75,-5.75);
\draw (-4.33,-6) -- (-6.167,-4.167);
\draw (-3.67,-5) -- (-5.33,-3.33);
\draw (-2.25,-3) -- (-3.625,-1.625);
\draw (-1.5,-2) -- (-2.75,-0.75);
\draw (-0.75,-1) -- (-1.825,0.175);
\draw (1.33,1) -- (0.167,2.167);
\draw (2.67,2) -- (1.33,3.33);
\draw (5.5,4) -- (3.75,5.75);

\fill[gray] (-7.75,-15) circle (0.2);
\fill[gray] (-7.5,-14) circle (0.2);
\fill[gray] (-7.25,-13) circle (0.2);
\fill[gray] (-6.67,-11) circle (0.2);
\fill[gray] (-6.33,-10) circle (0.2);
\fill[gray] (-5.5,-8) circle (0.2);
\fill[gray] (-4.33,-6) circle (0.2);
\fill[gray] (-3.67,-5) circle (0.2);
\fill[gray] (-2.25,-3) circle (0.2);
\fill[gray] (-1.5,-2) circle (0.2);
\fill[gray] (-0.75,-1) circle (0.2);
\fill[gray] (1.33,1) circle (0.2);
\fill[gray] (2.67,2) circle (0.2);
\fill[gray] (5.5,4) circle (0.2);

\end{tikzpicture}
\caption{An illustration showing how the construction of Theorem \ref{thm:3APLower} can be derived from Jarn\'{i}k's construction of a convex curve with many lattice points.}\label{fig:jrnk}
\end{figure}

\section{Sidon sets in convex sets}\label{sec-sidon}

We now turn to the problem of finding large Sidon sets in convex sets. We first prove Theorem \ref{thm:Sidonlower}, using a standard probabilistic argument.

\begin{proof}[Proof of Theorem \ref{thm:Sidonlower}]
    Let $A'\subseteq A$ be a $p$-random subset, where we include each $x\in A$ independently with probability $p$. The value of $0<p<1$ will be specified later. The expected size of $A$ is $p\abs{A}$.
    There are two kinds of non-trivial energy solutions that we need to consider separately, namely the solutions to the equation
    \[
    a+b=c+d
    \]
    where the values $a,b,c,d$ are pairwise distinct, and the solutions for which either $a=b$ or $c=d$. Let $T$ denote the number of solutions of the latter type and note that $T$ is at most a constant factor times the number of non-trivial $3$-term arithmetic progressions in $A$. Let $E'$ denote the number of solutions of the first type and note that $E' \leq E(A)$. Therefore, the expected number of non-trivial additive quadruples is, for any $\epsilon>0$,
\[p^3T+p^4E \ll p^3T_3(A)+p^4E(A) \ll_\epsilon p^3|A|^{5/3}+p^4|A|^{\frac{123}{50}+\epsilon},
\]
where we have used \eqref{energyupper} and Theorem \ref{thm:3APupper}. We choose $p=c\abs{A}^{-\frac{73}{150}-\epsilon}$ for some sufficiently small constant $c=c(\epsilon)>0$, so that the expected number of non-trivial additive quadruples in $A$ is at most
\[\frac{p|A|}{2} = \tfrac{1}{2}\mathbb E |A'|.\]
In particular, if we let $Q$ denote the number of non-trivial energy solutions in $A'$, then
\[
\mathbb E(|A'| - Q)\geq \frac{p|A|}{2}.
\]
It follows that there exists a set $A' \subseteq A$ such that $|A'| - Q \geq \frac{p|A|}{2} $. Therefore, after pruning the set $A'$ by deleting an element for each contribution to $Q$, we find $A'' \subseteq A' \subseteq A$ such that $A''$ is Sidon and 
\[
|A''| \geq |A'| - Q \geq \frac{p|A|}{2} \gg_\epsilon |A|^{\frac{77}{150} - \epsilon}.
\]
\end{proof}

We note that using the energy bound from Conjecture \ref{conj1}, rather than \eqref{energyupper}, in this proof yields the existence of a Sidon set $A' \subseteq A$ with $|A'| \geq |A|^{2/3- o(1)}$.

Finally, we turn to the task of proving Theorem \ref{thm:Sidonupper}. That is, we seek to show the existence of a convex set $A$ such that every set $A' \subseteq A$ with $|A'| \gg |A|^{3/4}$ contains a non-trivial additive energy solution. For this, we appeal to a graph theoretical argument Erd\H{o}s \cite{Er84} used in order to establish the existence of a set with very small additive energy but still having fairly small $S(A)$. First, we formally state a previously mentioned result of Ruzsa and Zhelezov \cite{RuZh19} that will be used in the proof.

\begin{theorem} \label{thm:RZ}
Let $ n$ be a sufficiently large integer. There are sets $B,C\subseteq \bbr$ with $\abs{B} \asymp n$ and $\abs{C}\asymp n$ such that $B+C$ contains a convex set of size $\asymp n^2$. 
\end{theorem}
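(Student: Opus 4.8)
The plan is to prove Theorem~\ref{thm:RZ} by exhibiting an explicit multiplicative-type construction, following the strategy of Ruzsa and Zhelezov. Observe first that if $B$ and $C$ are \emph{geometric-like} sets, then $B+C$ can be made to contain a large convex set: concretely, I would take the ground set to be $\{1,2,\dots,2n\}$ and work with its multiplicative structure. Write each integer $k \in \{1,\dots,2n\}$ as $k = u\cdot v$ where $u$ ranges over a suitably chosen set of ``small'' parts and $v$ over ``large'' parts; taking logarithms, the additive problem for $\log B + \log C$ becomes the statement that the logarithms of a positive-density subset of $\{1,\dots,2n\}$ can be covered by a sumset $U + V$ with $|U|,|V| \asymp n^{1/2}$ after a logarithmic change of variables. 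This is essentially the Ruzsa--Zhelezov phenomenon: a ``smooth-number'' decomposition. The key point is that although $\log\{1,\dots,2n\}$ is certainly not convex, a \emph{large subset} of it lies inside a short sumset, and one then converts convexity of the integers themselves (the consecutive differences $1,1,1,\dots$) into convexity on the logarithmic scale via the map $k \mapsto \log k$, whose second derivative is strictly negative — hence $\{\log 1, \log 2, \dots, \log N\}$ is a concave (equivalently, after reflection, convex) set of size $N$.

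Concretely, here is the construction I would carry out. Fix $N \asymp n^2$ and let $D = \{\log k : 1 \le k \le N\}$, which is a convex set of size $N$ (its consecutive differences $\log(k+1) - \log k = \log(1+1/k)$ are strictly decreasing, so after negating we get a genuine convex set). I want to realise a positive-proportion subset $D' \subseteq D$ as $B_0 + C_0$ with $|B_0|,|C_0| \asymp N^{1/2} \asymp n$. For this, partition the integers in $[1,N]$ according to their largest divisor not exceeding $N^{1/2}$: every $k \le N$ with no prime factor exceeding $N^{1/2}$ (and most $k$ are of this form up to a constant density, or one can restrict to $N^{1/2}$-smooth numbers, of which there are $\gg N$) can be written as $k = rs$ with $r \le N^{1/2}$ and $s \le N^{1/2}$; letting $B = \{\log r : r \in R\}$ and $C = \{\log s : s \in S\}$ for the relevant sets $R,S \subseteq [1,N^{1/2}]$ of sizes $\asymp n$, we get that $B + C \supseteq \{\log k : k \in D'\}$ where $D'$ is the set of $N^{1/2}$-smooth numbers in $[1,N]$. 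Finally one checks that $D'$, sitting inside the convex set $D$, still contains a convex \emph{subset} of size $\asymp N \asymp n^2$ — either directly, because the smooth numbers have positive density in long intervals (so one can extract a convex subset of comparable size using a greedy/pigeonhole argument), or by replacing $D$ with an appropriately rescaled interval on which density-one smoothness holds.

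The main obstacle I anticipate is the last step: passing from ``$D'$ has density $\gg 1$ inside a convex set $D$'' to ``$D'$ \emph{contains} a convex subset of size $\asymp |D|$''. Positive density alone does not force a large convex subset (a positive-density subset of an arithmetic progression need contain no three-term progression at all). The resolution in Ruzsa--Zhelezov is more delicate: one does not take smooth numbers of \emph{all} integers up to $N$, but rather restricts to integers lying in a cleverly chosen sub-range or residue structure where the required divisor factorisation $k = rs$ with $r,s$ in prescribed short intervals holds for \emph{every} $k$, so that $B+C$ contains an honest interval's worth of logarithms — which is automatically convex. I would therefore spend the bulk of the argument arranging the factorisation to be exhaustive on a block of $\asymp n^2$ consecutive integers (e.g. by choosing $R$ and $S$ to be intervals $[M, M+n]$ and $[M', M'+n]$ with $M,M' \asymp n$, so that $R \cdot S$ covers an interval of length $\asymp n^2$ around $MM'$ with every integer represented), and then $B+C = \log(R) + \log(S) \supseteq \log([MM', MM'+cn^2])$ contains the convex set $\{\log t : MM' \le t \le MM' + cn^2\}$ of size $\asymp n^2$, with $|B| = |R| \asymp n$ and $|C| = |S| \asymp n$. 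Checking that every integer in a length-$\asymp n^2$ window is of the form $rs$ with $r \in [M,M+n]$, $s \in [M',M'+n]$ is an elementary but slightly fiddly counting argument (it amounts to the hyperbola $rs = t$ passing through the box $[M,M+n]\times[M',M'+n]$ at a lattice point for each admissible $t$), and this is where I would be most careful.
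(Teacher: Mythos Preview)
The paper does not actually prove Theorem~\ref{thm:RZ}; it is quoted as a result of Ruzsa and Zhelezov \cite{RuZh19} and used as a black box. (There is an abandoned polynomial attempt in the source, in the spirit of Theorem~\ref{thm:constructiongen}, but it is commented out with a noted error.) So there is no ``paper's own proof'' to compare against, and your logarithmic/multiplicative strategy is indeed broadly the Ruzsa--Zhelezov philosophy.

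That said, your proposed execution has a genuine gap at the very last step. You claim that with $R=[M,M+n]$ and $S=[M',M'+n]$, $M,M'\asymp n$, the product set $R\cdot S$ contains \emph{every} integer in some window of length $\asymp n^2$, so that $\{-\log t : t\textrm{ in that window}\}$ is a convex set inside $(-\log R)+(-\log S)$. This is false. Any prime $p$ in the target window $[MM',\,MM'+cn^2]$ has $p\asymp n^2$, and its only factorisations are $1\cdot p$ and $p\cdot 1$; since $1\notin R$ and $1\notin S$, no such $p$ lies in $R\cdot S$. There are $\asymp n^2/\log n$ such primes, so the product set misses a positive proportion of the window, not just a few stray points. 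Your phrasing ``the hyperbola $rs=t$ passes through the box $[M,M+n]\times[M',M'+n]$ at a lattice point for each admissible $t$'' is precisely the statement that fails: for $t$ prime the hyperbola meets $\bbz^2_{>0}$ only at $(1,t)$ and $(t,1)$.

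You correctly diagnosed, a few lines earlier, that ``positive density inside a convex set'' is not enough to extract a large convex subset; unfortunately the interval-product construction you then propose lands you back in exactly that situation (a density-$1-o(1)$ subset of $\{-\log t : t\in I\}$), and you have not supplied the ``more delicate'' step you allude to. The actual Ruzsa--Zhelezov argument does not try to hit an interval of integers multiplicatively; one has to arrange the construction so that the sumset contains an entire, explicitly convex, sequence by design --- for instance via a quadratic-polynomial parametrisation as in Theorem~\ref{thm:constructiongen} (and the paper's commented-out attempt), rather than hoping factorisation covers an interval.
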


\begin{proof}[Proof of Theorem \ref{thm:Sidonupper}]
Apply Theorem \ref{thm:RZ}. We obtain sets $B$ and $C$, with $\abs{B} \asymp n$ and $\abs{C} \asymp n$, such that $B+C$ contains a convex set $A$ with $ |A| \gg n^2$. We can use the set $A$ to build a bipartite graph on $B \times C$. Namely, for each $a \in A$, fix a representation $a=b+c$ arbitrarily, with $(b,c) \in B \times C$. For each such $a$, draw the edge connecting $b$ and $c$.

Suppose that $A' \subset A$ is a Sidon set with size $Kn^{3/2}$ for a sufficiently large constant $K$. Then the subgraph corresponding to $A'$ contains $Kn^{3/2}$ edges, and it follows from the K\H{o}v\'{a}ri-S\'{o}s-Tur\'{a}n Theorem that this graph contains a $C_4$. 

In other words, there exist $b_1,b_2\in B$ and $c_1,c_2\in C$ such that 
\[b_1+c_1,b_1+c_2,b_2+c_1,b_2+c_2\in A',\]
which contradicts $A'$ being Sidon, since
\[(b_1+c_1)+(b_2+c_2)=(b_1+c_2)+(b_2+c_1).\]

\end{proof}

As a final remark, let us discuss a potential improvement to Theorem \ref{thm:Sidonupper}. If we had an unbalanced version of Theorem \ref{thm:RZ}, namely with $|C| \asymp n^2$ and a convex set in $B+C$ with size $\asymp n^3$, then the same argument as above (applying a version of the K\H{o}v\'{a}ri-S\'{o}s-Tur\'{a}n Theorem for an unbalanced bipartite graph) could be used to give a construction with $S(A) \ll |A|^{2/3}$. We were, however, unable to prove an unbalanced form of the Ruzsa-Zhelezov result.

\subsection*{Acknowledgements} We are grateful to Brandon Hanson, Michalis Kokkinos, Akshat Mudgal, Misha Rudnev, Ilya Shkredov, and Audie Warren for helpful conversations. Jakob Führer and Oliver Roche-Newton were supported by the Austrian Science Fund FWF Project PAT2559123. Thomas Bloom is supported by a Royal Society University Research Fellowship.

\bibliographystyle{plain}
\bibliography{refs}

\end{document}